\documentclass[12pt,reqno]{amsart}

\usepackage[margin=1in]{geometry}
\usepackage[T1]{fontenc}
\usepackage[utf8]{inputenc}
\usepackage{amsmath,amsfonts,amssymb,amsthm,amscd,mathtools}
\usepackage{hhline}
\usepackage{float}
\usepackage{graphicx}
\usepackage[titletoc,title]{appendix}
\usepackage[dvipsnames]{xcolor}
\usepackage{caption}
\usepackage{listings}
\usepackage{hyperref}

\usepackage[table,dvipsnames]{xcolor}
\usepackage{colortbl}

\numberwithin{equation}{section}

\newtheorem{thm}{Theorem}[section]

\newtheorem{lem}[thm]{Lemma}
\newtheorem{prop}[thm]{Proposition}

\newtheorem{defi}[thm]{Definition}

\definecolor{deepblue}{rgb}{0,0,0.5}
\definecolor{deepred}{rgb}{0.6,0,0}
\definecolor{deepgreen}{rgb}{0,0.5,0}
\definecolor{ao}{rgb}{0.0,0.5,0.0}

\newcommand{\bburl}[1]{\textcolor{blue}{\url{#1}}}
\newcommand{\seqnum}[1]{\href{https://oeis.org/#1}{\rm\underline{#1}}}

\begin{document}

\title{Counting Schreier Sets Under Neighborhood Conditions}

\author[B. Chen]{Ben Chen}
\email{\textcolor{blue}{\href{mailto:bec006@ucsd.edu}{bec006@ucsd.edu}}}
\address{Department of Mathematics, UC San Diego, La Jolla, CA 92093, USA}

\author[H. V. Chu]{H\`ung Vi\d{\^e}t Chu}
\email{\textcolor{blue}{\href{mailto:hchu@wlu.edu}{hchu@wlu.edu}}}
\address{Department of Mathematics, Washington and Lee University, Lexington, VA 24450, USA}  

\author[C. F. Foss]{Corbin F. Foss}
\email{\textcolor{blue}{\href{mailto:corbinwatts09@gmail.com}{corbinwatts09@gmail.com}}}
\address{Department of Mathematics, University of Akron, Akron, OH 44325, USA}

\author[J. Luo]{Jie Luo}
\email{\textcolor{blue}{\href{mailto:jieluo589@gmail.com}{jieluo589@gmail.com}}}
\address{School of Mathematical Sciences, University of Nottingham, Nottingham, NG7 2RD, United Kingdom}

\author[S. J. Miller]{Steven J. Miller} 
\email{\textcolor{blue}{\href{mailto:sjm1@williams.edu}{sjm1@williams.edu}, \href{mailto:Steven.Miller.MC.96@aya.yale.edu}{Steven.Miller.MC.96@aya.yale.edu}}}
\address{Department of Mathematics, Williams College, MA 01267, USA}

\author[R. Ren]{Richard Ren}
\email{\textcolor{blue}{\href{mailto:rren@uchicago.edu}{rren@uchicago.edu}}}
\address{Department of Mathematics, University of Chicago, Chicago, IL 60637, USA}

\author[G. Tresch]{Garrett Tresch}
\email{\textcolor{blue}{\href{mailto:treschgt@ucmail.uc.edu}{treschgt@ucmail.uc.edu}}}
\address{Department of Mathematics, University of Cincinnati Blue Ash, Blue Ash, OH  45236, USA}

\subjclass[2020]{05A19 (primary); 11B37; 11Y55; 05A15 (secondary)}
\keywords{Schreier sets, linear recurrences, neighborhood}
\thanks{This work was partially supported by the National Science Foundation DMS2341670. We thank the
participants at Polymath Jr. 2026 for helpful discussions.}

\begin{abstract}
We count Schreier sets that satisfy a neighborhood condition, including $k$-clustered, $k$-consecutive-free, $k$-neighbored, $k$-isolated, and closed under integral $2$-averages. For
the first four conditions, we determine the initial counts and prove linear recurrence relations.
For the last condition, we prove a recurrence that involves the divisor counting function.
\end{abstract}

\maketitle

\tableofcontents

\section{Introduction}

A finite nonempty set $F\subset \mathbb N$ is called \textit{Schreier} if $\min F\ge |F|$. Let us use $[n]$ to denote the set $\{1, 2, \ldots, n\}$ and use $[a,b]$ to denote the set of all integers between $a$ and $b$, inclusively. Bird discovered an unexpected connection between Schreier sets and the Fibonacci numbers $(F_n)_{n=1}^\infty$, given by $F_1 = F_2 = 1$ and $F_n = F_{n-1} + F_{n-2}$ for $n\ge 3$ \cite{bird2012shcreierfib}. Bird defined 
\begin{equation}\label{e30}\mathcal{S}_{n}\ :=\ \{F\subset[n]\,:\, n\in F\mbox{ and }  F\mbox{ is Schreier}\}\end{equation}
and showed that 
\begin{equation}\label{bfs}|\mathcal{S}_n|\ =\ F_n\mbox{ for all }n\ge 1.\end{equation} Various generalizations of \eqref{e30} have accummulated over the years. Beanland et al.\ studied the condition $q\min F\ge p|F|$ with $(p,q)\in \mathbb{N}^2$ and witnessed recurrences whose depth is equal to $p+q$ \cite{BCF}. More recently, Beanland et al.\ counted unions of Schreier sets and described the recurrences recursively using characteristic polynomials \cite{BGHH}.  By counting multisets, Chu et al.\ connected the Schreier condition to the $s$-step Fibonacci numbers and derived new recurrence families and alternating-sign Pascal-type recurrences \cite{CGKMTV, CIMSZ}. Last but not least, interested readers may refer to the exposition \cite{C26} for common proof techniques in the area together with several new contributions. 

We introduce several conditions that build on the usual Schreier condition, all centered on restricting the neighborhood of each element in a set. The guiding principle is that linear recurrences can often be uncovered experimentally, through patterns suggested by initial data, and then explained combinatorially. We partition the underlying families of objects in a natural way and construct bijections that translate these partitions into recurrence relations. Often, these partitions involve splitting families of Schreier sets into those where the Schreier condition is achieved as an equality and where the inequality is strict. Specifically, we shall say a set $F\subset \mathbb{N}$ is \textit{maximal Schreier} if $\min F=|F|$ and \textit{nonmaximal Schreier} if $\min F> |F|$.

In Section~\ref{sect_k_clustered}, we study Schreier sets that are $k$\textit{-clustered}, meaning that every element is within a cluster of $k$ consecutive integers. Precisely, for each $k\in\mathbb N$, a nonempty set $F\subset\mathbb N$ is called \emph{$k$-clustered} if for every $a\in F$, there exists an interval of consecutive integers $I\subseteq F$ such that $a\in I$ and $|I|\ge k$. We are interested in the collection
\begin{equation}
\mathcal{C}_{k,n}\ :=\ \{F\subset [n]\,: \, n\in F, F\text{ is Schreier, and }F\text{ is }k\text{-clustered}\}.
\end{equation}
Table \ref{tab:k-clustered-data} computes $|\mathcal{C}_{k,n}|$ with small $k$ and $n$.

\begin{table}[H]
\centering
\begin{tabular}{|c|c|c|c|c|c|c|c|c|c|c|c|c|c|c|c|c|c|c|c|}
\hline
$n$& $1$ & $2$ & $3$ & $4$ & $5$ & $6$ & $7$ & $8$ & $9$ & $10$ & $11$
& $12$ & $13$ & $14$ & $15$ & $16$ & $17$ & $18$ & $19$ \\
\hline
$|\mathcal C_{2,n}|$
& $0$ & $0$ & $1$ & $1$ & $2$ & $2$ & $3$ & $4$ & $6$ & $9$ & $13$
& $19$ & $27$ & $39$ & $56$ & $81$ & $117$ & $169$ & $244$  \\
\hline
$|\mathcal C_{3,n}|$
& $0$ & $0$ & $0$ & $0$ & $1$ & $1$ & $2$ & $2$ & $3$ & $3$ & $4$
& $5$ & $7$ & $10$ & $14$ & $20$ & $27$ & $37$ & $49$  \\
\hline
$|\mathcal C_{4,n}|$
& $0$ & $0$ & $0$ & $0$ & $0$ & $0$ & $1$ & $1$ & $2$ & $2$ & $3$
& $3$ & $4$ & $4$ & $5$ & $6$ & $8$ & $11$ & $15$ \\
\hline
$|\mathcal C_{5,n}|$
& $0$ & $0$ & $0$ & $0$ & $0$ & $0$ & $0$ & $0$ & $1$ & $1$ & $2$
& $2$ & $3$ & $3$ & $4$ & $4$ & $5$ & $5$ & $6$ \\
\hline
\end{tabular}
\caption{The first few values of $(|\mathcal C_{k,n}|)_{n=1}^\infty$ with $2\le k\le 5$.}
\label{tab:k-clustered-data}
\end{table}

\begin{thm}\label{thm:k-clustered-recurrence}
Let $k\ge 2$. We have 
\begin{equation}
|\mathcal C_{k,n}|\ =\ 
\begin{cases}
    0, & \text{if } 1\le n\le 2k-2,\\
    1, & \text{if } 2k-1\le n\le 2k,\\
    2, & \text{if } n = 2k+1.
\end{cases}
\end{equation}
For $n\ge 2k+2$, we have
\begin{equation}
|\mathcal C_{k,n}|\ =\ |\mathcal C_{k,n-1}|+|\mathcal C_{k,n-2}|-|\mathcal C_{k,n-3}|+|\mathcal C_{k,n-(2k+1)}|.
\end{equation}
\end{thm}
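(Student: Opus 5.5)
Write $a_n:=|\mathcal C_{k,n}|$. A set $F$ is $k$-clustered exactly when each maximal run of consecutive integers in $F$ has length at least $k$; all the bijections below are built around this fact.

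\textbf{Initial values.} I would settle these by direct inspection. Any $F\in\mathcal C_{k,n}$ contains a run of length at least $k$ ending at $n$, so $\min F\le n-k+1$ and $|F|\ge k$. The Schreier condition then forces $n-k+1\ge k$, so $a_n=0$ for $n\le 2k-2$.

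A set with two runs needs at least $2k$ elements. It also needs one gap, so its maximum is at least $\min F+2k$, hence at least $4k$. Therefore for $n\le 2k+1$ only single intervals $[m,n]$ occur. For these one checks that $m\ge n-m+1$ and $n-m+1\ge k$:
\begin{itemize}
\item $n=2k-1$ gives only $[k,2k-1]$;
\item $n=2k$ gives only $[k+1,2k]$;
\item $n=2k+1$ gives $[k+1,2k+1]$ and $[k+2,2k+1]$.
\end{itemize}

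\textbf{Step 1: split off the maximal sets.} The shift $F\mapsto F+1$ preserves $k$-clusteredness and size, and it raises the minimum by one. So it maps $\mathcal C_{k,n-1}$ injectively into $\mathcal C_{k,n}$. Its image is precisely the nonmaximal Schreier sets in $\mathcal C_{k,n}$, since $G-1$ is Schreier iff $\min G-1\ge |G|$. Writing $M_n$ for the number of maximal Schreier sets in $\mathcal C_{k,n}$, this gives $a_n=a_{n-1}+M_n$. The claimed recurrence is then equivalent to
\[
M_n=M_{n-2}+a_{n-2k-1}\qquad (n\ge 2k+2),
\]
because $a_{n-2}-a_{n-3}=M_{n-2}$.

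\textbf{Step 2: split the maximal sets by the last run.} Take a maximal $G'$ with $\max G'=n-2$, so $\min G'=|G'|=m$. Map it to $(G'+1)\cup\{n\}$. This set has minimum $m+1$ and size $m+1$, so it is maximal with maximum $n$. Its last run is the old last run shifted and lengthened by one, so it is still $k$-clustered.

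This map is injective. Its image is exactly the maximal $G$ with $\max G=n$ whose last run has length at least $k+1$: for such $G$, removing $n$ and shifting down by one inverts the map.

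The remaining maximal sets have last run exactly $[n-k+1,n]$, so $G=H\cup[n-k+1,n]$ with $\max H\le n-k-1$. Here $H$ is $k$-clustered and the gap condition is automatic. Since $G$ is maximal, $\min H=|H|+k$. The case $H=\emptyset$ forces $n=2k-1$, which is excluded. So the count of these sets is the number of $k$-clustered $H$ with $\max H\le n-k-1$ and excess $\min H-|H|=k$.

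\textbf{Step 3: identify the leftover count with $a_{n-2k-1}$.} This is the step that needs care; I would handle it by shifting down, which lowers both excess and maximum by one and preserves $k$-clusteredness. Shifting $H$ down by $k$ gives a bijection onto the $k$-clustered sets of excess $0$ with maximum at most $n-2k-1$. Grouping these by their maximum $n-2k-1-d$ with $d\ge 0$, and shifting up by $d$, gives a bijection onto the sets with maximum exactly $n-2k-1$ and excess exactly $d$. The union over $d\ge 0$ is all of $\mathcal C_{k,n-2k-1}$. Hence the leftover count equals $a_{n-2k-1}$.

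The main obstacle is this final regrouping over all excesses, together with checking that the injectivity and image claims in Step 2 hold down to $n=2k+2$.
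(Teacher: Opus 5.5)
Your proof is correct and is essentially the paper's argument. Your Step 2 map is the paper's $f_2$, $A\mapsto (A+1)\cup\{n\}$, restricted to maximal sets, and your Step 3 composite shift $A\mapsto \bigl(A-(\min A-|A|)+k\bigr)\cup[n-k+1,n]$ is exactly the paper's $f_3$; the only difference is bookkeeping. You partition $\mathcal C_{k,n}$ into three disjoint pieces and get the $-|\mathcal C_{k,n-3}|$ term by reusing the shift identity at level $n-2$, whereas the paper uses the overlapping pieces $\mathcal C^1_{k,n},\mathcal C^2_{k,n},\mathcal C^3_{k,n}$ and subtracts $\mathcal C^1_{k,n}\cap\mathcal C^2_{k,n}$ through a separate bijection $f_4$, $A\mapsto (A+2)\cup\{n\}$.
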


Section~\ref{sect_k-free} heads in the opposite direction of our first result, considering Schreier sets that are $k$\textit{-consecutive free}, meaning there are no blocks of $k$ consecutive integers. 
    For each $k\ge 2$ and $n\in \mathbb{N}$, we are interested in the collection 
    \begin{equation}\mathcal{E}_{k,n}\ :=\ \{F\subset[n]\,:\,n\in F, F\mbox{ is Schreier},\text{ and }[m,\,m\,+\,k\,-\,1]\not\subset F,\,\text{for all }m\in F\}.\end{equation}
    A python code generates Table \ref{tab:k-consecutive}.
    \begin{table}[H]
    \centering
    \begin{tabular}{|c|c|c|c|c|c|c|c|c|c|c|c|c|c|c|c|c|c|c|}
    \hline
        $n$                 & $1$ & $2$ & $3$ & $4$ & $5$ & $6$ & $7$  & $8$  & $9$  & $10$ & $11$ & $12$  & $13$  & $14$  & $15$  & $16$  & $17$   & $18$   \\
        \hline
        $|\mathcal{E}_{2,n}|$ & $1$ & $1$ & $1$ & $2$ & $3$ & $4$ & $6$  & $9$  & $13$ & $19$ & $28$ & $41$  & $60$  & $88$  & $129$ & $189$ & $277$  & $406$  \\
        \hline
        $|\mathcal{E}_{3,n}|$ & $1$ & $1$ & $2$ & $3$ & $4$ & $7$ & $11$ & $17$ & $27$ & $42$ & $66$ & $104$ & $163$ & $256$ & $402$ & $631$ & $991$  & $1556$ \\
        \hline
        $|\mathcal{E}_{4,n}|$ & $1$ & $1$ & $2$ & $3$ & $5$ & $8$ & $12$ & $20$ & $32$ & $51$ & $82$ & $131$ & $210$ & $336$ & $538$ & $862$ & $1380$ & $2210$ \\
        \hline
        $|\mathcal{E}_{5,n}|$ & $1$ & $1$ & $2$ & $3$ & $5$ & $8$ & $13$ & $21$ & $33$ & $54$ & $87$ & $140$ & $226$ & $364$ & $587$ & $946$ & $1525$ & $2458$ \\
        \hline
    \end{tabular}
    \caption{The first few terms of $(|\mathcal{E}_{k,n}|)_{n=1}^\infty$ with $2\le k \le 5$. The sequence $(|\mathcal{E}_{2,n}|)$ is \seqnum{A000930} in the On-Line Encyclopedia of Integer Sequences (OEIS) \cite{OEIS}, which satisfies $|\mathcal{E}_{2,n}| = |\mathcal{E}_{2, n-1}| + |\mathcal{E}_{2,n-3}|$. Similarly, the sequences $(|\mathcal{E}_{3,n}|)$ and $(|\mathcal{E}_{4,n}|)$ are \seqnum{A222122} and \seqnum{A117761}, respectively. The sequence $(|\mathcal{E}_{5,n}|)$ is not yet available on the OEIS.}
    \label{tab:k-consecutive}
    \end{table}

    \begin{thm}\label{thm:k-consecutive-free-recurrence}
    Let $k\ge 2$. We have
    \begin{equation}|\mathcal{E}_{k,n}|\ =\ F_n \mbox{ if } 1\le n\le 2k-2,\mbox{ and } |\mathcal{E}_{k,2k-1}|\ =\ F_{2k-1}-1.\end{equation}
    For 
    $n\ge 2k$, we have
    \begin{equation}|\mathcal{E}_{k,n}|\ =\ |\mathcal{E}_{k,n-1}| + |\mathcal{E}_{k,n-3}|+\cdots +|\mathcal{E}_{k,n-(2k-1)}|\ =\ \sum_{i=1}^{k}|\mathcal{E}_{k,n-(2i-1)}|.\end{equation}
    \end{thm}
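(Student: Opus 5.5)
We first handle the initial values by a size argument, and then prove the recurrence with a Bird-type bijection that splits $\mathcal{E}_{k,n}$ according to the length of the final run of consecutive integers ending at $n$.

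\emph{Initial values.} Let $F\in\mathcal{S}_n$ contain a block $[m,m+k-1]$. Then $|F|\ge k$, so $\min F\ge k$ and $n\ge m+k-1\ge 2k-1$. Hence for $n\le 2k-2$ no Schreier set with maximum $n$ contains $k$ consecutive integers, so $\mathcal{E}_{k,n}=\mathcal{S}_n$ and \eqref{bfs} gives $|\mathcal{E}_{k,n}|=F_n$. For $n=2k-1$, every inequality above must be an equality. So $m=\min F=k$ and $F\supseteq[k,2k-1]$, which already has $k$ elements, forcing $F=[k,2k-1]$. This is the unique excluded set, so $|\mathcal{E}_{k,2k-1}|=F_{2k-1}-1$.

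\emph{Recurrence for $n\ge 2k$.} For $F\in\mathcal{E}_{k,n}$, let $j\in\{1,\dots,k-1\}$ be the length of the maximal run $[n-j+1,n]\subseteq F$, so $n-j\notin F$. Write $G=F\setminus[n-j+1,n]\subseteq[n-j-1]$. Define
\[
\Phi(F)\ =\ \bigl(G\cup\{n-j\}\bigr)-(j-1).
\]
Then $|\Phi(F)|=|F|-j+1$ and $\min\Phi(F)\ge \min F-(j-1)$, so $\Phi(F)$ is Schreier with maximum $n-(2j-1)$. The degenerate case $G=\emptyset$ gives the singleton $\{n-2j+1\}$.

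Conversely, given $H$ with $\max H=n-(2j-1)$, we delete its maximum, shift up by $j-1$, and append $[n-j+1,n]$. This inverts $\Phi$, and the resulting $F$ is $k$-consecutive-free exactly when $H\setminus\{\max H\}$ is. So $\Phi$ maps the $j$-class bijectively onto the set $A_j$ of Schreier sets $H$ with $\max H=n-(2j-1)$ such that $H\setminus\{\max H\}$ is $k$-consecutive-free. This gives
\[
|A_j|\ =\ |\mathcal{E}_{k,n-(2j-1)}|+|B_j|,
\]
where $B_j$ is the set of those $H\in A_j$ whose top run has length exactly $k$. Summing over $j$ then gives
\[
|\mathcal{E}_{k,n}|\ =\ \sum_{j=1}^{k-1}|\mathcal{E}_{k,n-(2j-1)}|+\sum_{j=1}^{k-1}|B_j|,
\]
so it remains to show that $\sum_{j=1}^{k-1}|B_j|=|\mathcal{E}_{k,n-(2k-1)}|$.

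\emph{Main obstacle.} The key step, which I expect to be hardest, is this last identity, which produces the extra $k$-th term. My first attempt: an element $H\in B_j$ ends in $[n-2j-k+2,\,n-2j+1]$, preceded by a gap. Removing this top run of length $k$ leaves a $k$-consecutive-free set $G'$, and $\min H\ge|H|=|G'|+k$. I would try to reinsert a single new maximum and shift by the amount that depends on $j$, in the same spirit as $\Phi$, so that the $B_j$ are carried onto the $j$-classes of some partition of $\mathcal{E}_{k,n-(2k-1)}$, for instance by the length of its own top run or of the gap below it.

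If the direct bijection does not close up, I would instead prove $|B_j|=|A_{j+1}|-|\mathcal{E}_{k,n-(2j+1)}|$, or some similar shifted relation. The sum over $j$ should then telescope to $|\mathcal{E}_{k,n-(2k-1)}|$. In either route, $n\ge 2k$ is what ensures that all indices involved are at least $1$. I would check the boundary cases (empty $G$, and runs starting at $\min H$) against Table~\ref{tab:k-consecutive}.
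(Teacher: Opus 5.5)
Your initial values are correct, and so is the top-run bijection $\Phi$. It identifies the sets of $\mathcal{E}_{k,n}$ whose top run has length $j$ with $A_j=\mathcal{E}_{k,n-(2j-1)}\sqcup B_j$. But this only reduces the theorem to the identity $\sum_{j=1}^{k-1}|B_j|=|\mathcal{E}_{k,n-(2k-1)}|$, which is exactly where the $k$-th term of the recurrence comes from. You leave that identity unproved, so as written the proposal does not establish the recurrence.

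Your fallback route also cannot work. By your own decomposition, $A_{j+1}=\mathcal{E}_{k,n-(2j+1)}\sqcup B_{j+1}$, so the relation $|B_j|=|A_{j+1}|-|\mathcal{E}_{k,n-(2j+1)}|$ just says $|B_j|=|B_{j+1}|$. That is false: for $k=3$, $n=9$ one has $B_1=\{\{6,7,8\},\{4,6,7,8\}\}$ but $B_2=\{\{4,5,6\}\}$. What works is not reinserting a single new maximum, but replacing the top $k$-run by a $j$-run.

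To close the gap, write $H\in B_j$ as $H=G'\sqcup[n-2j-k+2,\,n-2j+1]$ with $G'\subseteq[n-2j-k]$ $k$-consecutive-free. Send it to $(G'-(k-j))\cup[n-2k-j+2,\,n-2k+1]$: the rest shifts down by $k-j$ and the top run becomes a $j$-run ending at $n-2k+1$. Under this map:
\begin{itemize}
\item the Schreier condition $\min G'\ge |G'|+k$ becomes $\min(G'-(k-j))\ge |G'|+j$;
\item the bound $\max G'\le n-2j-k$ becomes $\max(G'-(k-j))\le n-2k-j$, so a gap sits just below the new run;
\item when $G'=\emptyset$, both Schreier conditions read $n\ge 2k+2j-2$.
\end{itemize}
Hence $B_j$ is in bijection with the sets of $\mathcal{E}_{k,n-2k+1}$ whose top run has length exactly $j$. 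Summing over $j\in[1,k-1]$ gives the missing identity.

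With that step, your argument is a valid alternative to the paper's, which avoids the correction term altogether. The paper first splits off the nonmaximal sets, which shift down by $1$ onto $\mathcal{E}_{k,n-1}$. It then classifies maximal Schreier sets by the length $i-1\in[1,k-1]$ of the initial run starting at $\min F=|F|$. Deleting that run and shifting down by $2i-1$ lands directly in $\mathcal{E}_{k,n-(2i-1)}$, so all $k$ terms appear at once. Working at the bottom, where the Schreier equality lives, never creates a new run. Your $\Phi$ instead inserts $n-j$, which can complete a $k$-run, and that is what forces the extra $B_j$ bookkeeping.
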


In Section \ref{sect_k_neigh}, we study Schreier sets whose every element has a nearby neighbor. For $k\in\mathbb N$, a set $F\subset \mathbb N$ is called \emph{$k$-neighbored} if for every $a\in F$, there exists $b\in F\backslash\{a\}$ such that $|a-b|\le k$. Equivalently, $F$ is $k$-neighbored if
\begin{equation}
([a-k,a+k]\cap F)\backslash\{a\}\ \neq\ \emptyset, \mbox{ for every }a\in F.
\end{equation}
For $k,n\in\mathbb N$, we define
\begin{equation}
\mathcal D_{k,n}\ :=\ \{F\subset [n]: n\in F, F\text{ is Schreier, and }F\text{ is }k\text{-neighbored}\}.
\end{equation}

Table \ref{tab:k-neighbored-data} collects data on the size of $\mathcal{D}_{k,n}$.

\begin{table}[H]
\centering
\begin{tabular}{|c|c|c|c|c|c|c|c|c|c|c|c|c|c|c|c|c|c|c|}
\hline
$n$
& $1$ & $2$ & $3$ & $4$ & $5$ & $6$ & $7$ & $8$ & $9$ & $10$ & $11$ & $12$ & $13$ & $14$ & $15$ & $16$ & $17$ & $18$ \\
\hline
$|\mathcal D_{1,n}|$
& $0$ & $0$ & $1$ & $1$ & $2$ & $2$ & $3$ & $4$ & $6$ & $9$ & $13$ & $19$ & $27$ & $39$ & $56$ & $81$ & $117$ & $169$ \\
\hline
$|\mathcal D_{2,n}|$
& $0$ & $0$ & $1$ & $2$ & $3$ & $5$ & $7$ & $10$ & $15$ & $23$ & $36$ & $57$ & $90$ & $141$ & $220$ & $342$ & $531$ & $825$ \\
\hline
$|\mathcal D_{3,n}|$
& $0$ & $0$ & $1$ & $2$ & $4$ & $6$ & $10$ & $15$ & $23$ & $35$ & $55$ & $87$ & $139$ & $223$ & $358$ & $574$ & $918$ & $1466$ \\
\hline
$|\mathcal D_{4,n}|$
& $0$ & $0$ & $1$ & $2$ & $4$ & $7$ & $11$ & $18$ & $28$ & $44$ & $69$ & $109$ & $174$ & $279$ & $449$ & $724$ & $1168$ & $1884$ \\
\hline
$|\mathcal D_{5,n}|$
& $0$ & $0$ & $1$ & $2$ & $4$ & $7$ & $12$ & $19$ & $31$ & $49$ & $78$ & $124$ & $198$ & $317$ & $510$ & $822$ & $1327$ & $2144$ \\
\hline
$|\mathcal D_{6,n}|$
& $0$ & $0$ & $1$ & $2$ & $4$ & $7$ & $12$ & $20$ & $32$ & $52$ & $83$ & $133$ & $213$ & $342$ & $550$ & $886$ & $1430$ & $2310$\\
\hline
\end{tabular}
\caption{The first few values of $(|\mathcal D_{k,n}|)_{n=1}^\infty$ with $1\le k\le 6$.}
\label{tab:k-neighbored-data}
\end{table}

\begin{thm}\label{thm:k-neighbored-recurrence}
Let $k\ge 1$. For $n\le k+2$, we have
\begin{equation}|\mathcal{D}_{k,n}|\ =\ F_n - 1.\end{equation}
For $n\ge k+3$, we have
\begin{equation}
|\mathcal D_{k,n}|
\ =\
|\mathcal D_{k,n-1}|
+
|\mathcal D_{k,n-2}|
-
|\mathcal D_{k,n-k-2}|
+
\sum_{j=k+4}^{2k+3}|\mathcal D_{k,n-j}|,
\end{equation}
with the convention that $|\mathcal{D}_{k, i}| = 0$ if $i \le 0$.
\end{thm}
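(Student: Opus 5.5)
The initial values come from a diameter argument. If $F\in\mathcal D_{k,n}$ with $n\le k+2$, then $F\neq\{n\}$, so $|F|\ge 2$. Hence $\min F\ge 2$ and $F\subseteq[2,n]$, an interval of length $n-2\le k$. Every pair of elements of $F$ is then within distance $k$, so every Schreier set with maximum $n$ and at least two elements is $k$-neighbored. By \eqref{bfs} there are $F_n$ Schreier sets with maximum $n$, and only the singleton $\{n\}$ is excluded, giving $|\mathcal D_{k,n}|=F_n-1$.

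For the recurrence I would adapt Bird's bijective proof of \eqref{bfs}, splitting $\mathcal D_{k,n}$ into nonmaximal and maximal Schreier sets.

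\emph{Nonmaximal sets.} The shift $F\mapsto F-1$ preserves all pairwise differences, hence preserves $k$-neighboredness. It maps the nonmaximal sets in $\mathcal D_{k,n}$ bijectively onto $\mathcal D_{k,n-1}$, since $\min F>|F|$ is equivalent to $\min(F-1)\ge|F|$. This accounts for the term $|\mathcal D_{k,n-1}|$.

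\emph{Maximal sets.} Let $F$ be maximal with $m=\min F=|F|$ and write $F=\{m<f_2<\cdots\}$. Bird's map sends $F$ to $G=(F\setminus\{m\})-2$, a Schreier set with maximum $n-2$ and $|G|=m-1$. Its inverse adjoins $|G|+1$ and shifts up by $2$. Neighboredness fails to transfer in exactly two ways:
\begin{itemize}
\item[(a)] $m$ itself needs a neighbor, i.e.\ $f_2-m\le k$, which translates to $\min G\le |G|+k-1$;
\item[(b)] $f_2$ may have had $m$ as its only neighbor, in which case $\min G$ is isolated in $G$ while $G\setminus\{\min G\}$ is still $k$-neighbored.
\end{itemize}
Therefore the number of maximal sets is
\[
|\mathcal D_{k,n-2}|-A_{n-2}+B_{n-2},
\]
where
\begin{itemize}
\item[] $A_j$ counts sets in $\mathcal D_{k,j}$ with $\min G\ge |G|+k$;
\item[] $B_j$ counts Schreier sets with maximum $j$ whose minimum is isolated, whose remaining elements form a $k$-neighbored set, and whose minimum satisfies $\min G\le|G|+k-1$.
\end{itemize}

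\emph{Identifying $A$ and $B$.} Sets counted by $A_j$ are exactly those with $\min G-|G|\ge k$. Shifting down by $k$ preserves neighboredness and is a bijection onto $\mathcal D_{k,j-k}$, so $A_{n-2}=|\mathcal D_{k,n-k-2}|$. This produces the negative term in the statement. For $B_j$, I would write the isolated minimum as $a$ and the rest as $H\in\mathcal D_{k,j}$ with $\min H-a\ge k+1$. I would then sum over the possible gaps together with the constraint $a\le |H|+1+k-1$. After shifting, each allowed value of the slack $a-(|H|+1)\in\{0,\dots,k-1\}$ should correspond to a set in $\mathcal D_{k,\cdot}$ with maximum reduced by a fixed amount between $k+2$ and $2k+1$. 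The sum over these $k$ values would then give $\sum_{j=k+4}^{2k+3}|\mathcal D_{k,n-j}|$ once the extra $2$ from Bird's shift is added.

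\emph{Main obstacle.} The hard step is making this last correspondence exact. The natural "shift $H$ down" map must absorb both the gap $\ge k+1$ and the Schreier slack, and $H$ is itself constrained ($|H|\ge 2$, $\min H\ge |H|+1$), so the bookkeeping can easily be off by one. If a direct bijection is awkward, I would first establish a recurrence for $B_j$ in terms of $B_{j-1}$ and $\mathcal D$-terms by again splitting on whether $\min H$ is maximal. I would then telescope, using the identity $\mathcal D_{n}-\mathcal D_{n-1}=\text{(maximal count)}$ to eliminate the auxiliary sequence. Finally I would check the base range $k+3\le n\le 2k+3$ against Table~\ref{tab:k-neighbored-data}, using the convention that $|\mathcal D_{k,i}|=0$ for $i\le 0$.
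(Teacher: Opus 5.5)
Your proof is correct, but it organizes the maximal sets differently from the paper. The paper works at the \emph{top} of the set. It sorts maximal $F\in\mathcal D_{k,n}$ by the gap $i\in[1,k]$ between $n$ and the next element $n-i$, and by whether $n-i$ has a neighbor below.
\begin{itemize}
\item If $n-i$ has a neighbor below, deleting $n$ and shifting by $i-2$ matches these sets with the slice $\{G\in\mathcal D_{k,n-2}:\min G=|G|+i-1\}$. The $k$ slices together contribute $|\mathcal D_{k,n-2}|$ minus the sets with $\min G\ge|G|+k$. The latter are counted by $|\mathcal D_{k,n-k-2}|$ via the same shift by $k$ you use for $A_{n-2}$.
\item If $n-i$ has no neighbor below, the paper detaches the isolated top pair $\{n-i,n\}$. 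It then renormalizes the rest with $F\mapsto F-(\min F-|F|-2)$ out of $\mathcal D_{k,n-i-k-3}$, so the final sum is indexed by the gap $i$.
\end{itemize}
You instead work at the \emph{bottom} through Bird's map. There the detached pair is $\{\min F,f_2\}$, and the sum is indexed by the Schreier slack of the isolated minimum of $G$.

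What each route buys: yours uses only translations and presents the recurrence as Bird's recurrence with two local corrections. The paper's partitions $\mathcal D_{k,n}$ itself into $2k+1$ explicit classes and never passes through non-neighbored auxiliary sets like those counted by $B_j$. The price is a renormalizing map whose injectivity needs the fixed-maximum argument.

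The step you flag as the main obstacle is in fact immediate, so the telescoping fallback and the check against Table~\ref{tab:k-neighbored-data} are unnecessary. Fix the slack $s=a-|G|\in\{0,\dots,k-1\}$. The conditions on $H=G\setminus\{a\}$ become exactly:
\begin{itemize}
\item $H$ is $k$-neighbored;
\item $\max H=n-2$;
\item $\min H\ge a+k+1=|H|+s+k+2$.
\end{itemize}
The constraints $|H|\ge 2$ and $\min H\ge |H|+1$ that worried you are implied. Also $H\neq\emptyset$, since $G=\{n-2\}$ violates $\min G\le |G|+k-1$ when $n\ge k+3$. Thus $H\mapsto H-(s+k+2)$ is a bijection onto $\mathcal D_{k,n-s-k-4}$, consistently with the convention $|\mathcal D_{k,i}|=0$ for $i\le 0$. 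Summing over $s$ gives exactly $\sum_{j=k+4}^{2k+3}|\mathcal D_{k,n-j}|$.

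When writing this up, also justify your claim that $f_2$ is the only element that can lose its last neighbor when $m$ is removed. Any $f_j$ with $j\ge 3$ and $f_j-m\le k$ satisfies $f_j-f_2<f_j-m\le k$, so it still has $f_2$ as a neighbor.
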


Next, Section~\ref{sect_k_isolated} establishes a recurrence for $k$\textit{-isolated} Schreier sets which are the opposite of $k$-neighbored sets. For each $k\in \mathbb{N}$, a  nonempty set is called \textit{$k$-isolated} if for every $a\in D$, 
\begin{equation}([a-k, a-1]\cup [a+1, a+k])\cap D \ = \ \emptyset.\end{equation}
For $k,n\in\mathbb{N}$, we are interested in the collection 
\begin{equation}\mathcal{I}_{k, n}\ :=\ \{F\subset[n]\,:\, n\in F, F\mbox{ is Schrerier}, \mbox{ and }F\mbox{ is }k\mbox{-isolated}\}.\end{equation}
Table \ref{tab_iso} records $(|\mathcal{I}_{k,n}|)_{n=1}^{20}$ with $k\in \{1, 2, 3, 4\}$.
\begin{table}[H]
    \centering
    \begin{tabular}{|c|*{20}{c|}}
        \hline
        $n$                 
        & $1$ & $2$ & $3$ & $4$ & $5$ & $6$ & $7$ & $8$ & $9$ & $10$
        & $11$ & $12$ & $13$ & $14$ & $15$ & $16$ & $17$ & $18$ & $19$ & $20$ \\
        \hline
        $|\mathcal{I}_{1,n}|$
        & $1$ & $1$ & $1$ & $2$ & $3$ & $4$ & $6$ & $9$ & $13$ & $19$
        & $28$ & $41$ & $60$ & $88$ & $129$ & $189$ & $277$ & $406$ & $595$ & $872$ \\
        \hline 
        $|\mathcal{I}_{2,n}|$
        & $1$ & $1$ & $1$ & $1$ & $2$ & $3$ & $4$ & $5$ & $7$ & $10$
        & $14$ & $19$ & $26$ & $36$ & $50$ & $69$ & $95$ & $131$ & $181$ & $250$ \\
        \hline 
        $|\mathcal{I}_{3,n}|$
        & $1$ & $1$ & $1$ & $1$ & $1$ & $2$ & $3$ & $4$ & $5$ & $6$
        & $8$ & $11$ & $15$ & $20$ & $26$ & $34$ & $45$ & $60$ & $80$ & $106$ \\
        \hline
        $|\mathcal{I}_{4,n}|$
        & $1$ & $1$ & $1$ & $1$ & $1$ & $1$ & $2$ & $3$ & $4$ & $5$
        & $6$ & $7$ & $9$ & $12$ & $16$ & $21$ & $27$ & $34$ & $43$ & $55$ \\
        \hline
    \end{tabular}
    \caption{The first few values of $|\mathcal{I}_{k,n}|$ with $1\le k \le 4$.}
    \label{tab_iso}
\end{table}

By definition, the collections $\mathcal{I}_{1,n}$ and $\mathcal{E}_{2,n}$ are the same, so Theorem \ref{thm:k-consecutive-free-recurrence} gives
\begin{equation}|\mathcal{I}_{1,n}|\ =\ |\mathcal{I}_{1,n-1}|+|\mathcal{I}_{1,n-3}|, \mbox{ for all }n\ge 4.\end{equation}
For larger $k$, the sequence $(|\mathcal{I}_{k,n}|)_{n=1}^\infty$ satisfies a recurrence of the same form, which gives the Skipponacci numbers in \cite[Definition 1.5]{DDKMV}.

\begin{thm}\label{thm:k-isolated-recurrence}
Let  $k\ge 1$. For $1\le n\le k+2$, we have 
$|\mathcal{I}_{k,n}| = 1$.
For $n\ge k+3$, we have
\begin{equation}
|\mathcal{I}_{k,n}|\ =\ |\mathcal{I}_{k,n-1}|+|\mathcal{I}_{k,n-k-2}|.
\end{equation}
\end{thm}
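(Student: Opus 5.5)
We split $\mathcal{I}_{k,n}$ according to whether the Schreier condition is strict or an equality, and we match each part with one of the two terms of the recurrence.

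\emph{Initial values.} Take $n\le k+2$ and $F\in\mathcal{I}_{k,n}$ with $|F|\ge 2$. Then $F$ has an element $a<n$ with $n-a\ge k+1$, so $a\le n-k-1\le 1$. This forces $a=1$ and $\min F=1<2\le|F|$, contradicting the Schreier condition. Hence $\mathcal{I}_{k,n}=\{\{n\}\}$ and $|\mathcal{I}_{k,n}|=1$.

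\emph{Recurrence.} Fix $n\ge k+3$. Sets in $\mathcal{I}_{k,n}$ are exactly Schreier sets containing $n$ whose consecutive elements differ by at least $k+1$. Write $\mathcal{I}_{k,n}=\mathcal{A}\sqcup\mathcal{B}$, where $\mathcal{A}$ consists of the nonmaximal sets ($\min F>|F|$) and $\mathcal{B}$ of the maximal ones ($\min F=|F|$).

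For $\mathcal{A}$, use the shift $F\mapsto F-1=\{x-1:x\in F\}$. It preserves all gaps and $|F|$, and sends $n$ to $n-1$. Since $\min(F-1)=\min F-1\ge|F|$, the image is Schreier. The inverse $G\mapsto G+1$ sends any $G\in\mathcal{I}_{k,n-1}$ to a set with $\min(G+1)>|G|$, hence into $\mathcal{A}$. So $|\mathcal{A}|=|\mathcal{I}_{k,n-1}|$.

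For $\mathcal{B}$, write $F=\{a_1<\dots<a_m\}$ with $a_1=m$. The set $\{n\}$ lies in $\mathcal{B}$ only if $n=1$, so $m\ge 2$. Define
\[
\Phi(F)=\{a_2-k-2,\ a_3-k-2,\ \dots,\ a_m-k-2\},
\]
that is, delete the minimum and shift the rest down by $k+2$. The image:
\begin{itemize}
\item contains $n-k-2$;
\item keeps all gaps of $F\setminus\{m\}$, so it is still $k$-isolated;
\item has minimum $a_2-k-2\ge m+k+1-k-2=m-1=|\Phi(F)|$, so it is Schreier.
\end{itemize}
Its elements are positive because the minimum is at least $m-1\ge1$. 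Thus $\Phi(F)\in\mathcal{I}_{k,n-k-2}$.

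Conversely, given $G\in\mathcal{I}_{k,n-k-2}$ with $|G|=m-1$, form $\Psi(G)=\{m\}\cup(G+k+2)$. Its minimum is $m$, which equals its size. The new gap is $\min G+k+2-m\ge (m-1)+k+2-m=k+1$, which is exactly the isolation requirement. It contains $n$. Hence $\Psi(G)\in\mathcal{B}$, and $\Phi,\Psi$ are mutually inverse, so $|\mathcal{B}|=|\mathcal{I}_{k,n-k-2}|$.

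The hypothesis $n\ge k+3$ guarantees $n-k-2\ge 1$, so the target index is a genuine index. Adding the two counts gives $|\mathcal{I}_{k,n}|=|\mathcal{I}_{k,n-1}|+|\mathcal{I}_{k,n-k-2}|$.

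The main obstacle is the choice of shift in $\Phi$. It must be exactly $k+2$: one unit compensates for the size dropping by one when the minimum $m$ is removed, and $k+1$ units absorb the minimal allowed gap between $m$ and $a_2$. With this shift, the Schreier inequality for the image is equivalent to the isolation gap at the deleted element, which is what makes $\Psi$ land back in $\mathcal{B}$.
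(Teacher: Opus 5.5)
Your proof is correct and follows the paper's argument. You use the same split into nonmaximal and maximal Schreier sets, the unit shift for the first part, and the map $G\mapsto\{|G|+1\}\cup(G+k+2)$ (the paper's $f_2$, your $\Psi$) for the second, and your initial-value argument (finding an element $a\le n-k-1\le 1$) is only a minor rephrasing of the paper's bound $n-\min F\le k$.
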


Lastly, in Section \ref{sect_2_averages}, we obtain a nonlinear recurrence from counting Schreier sets that are closed under the average of every two elements whenever the average is an integer. For $n \ge 0$, define
\begin{align}
\mathcal{G}_{2n+1}&\ :=\ \left\{ F\subset [2n+1]\,:\, \begin{matrix}2n+1\in F, F\mbox{ is Schreier, and}\nonumber\\
\mbox{for all } a,b \in F, \text{ if } (a+b)/2 \in \mathbb{Z}, \text{ then } (a+b)/2 \in F\end{matrix}\right\}.\\
\end{align}

\begin{table}[H]
\begin{tabular}{|c|c|c|c|c|c|c|c|c|c|c|c|c|c|}
\hline
$n$                      & 0 & 1 & 2 & 3 & 4 & 5  & 6  & 7  & 8  & 9  & 10 & 11 & 12\\
\hline
$|\mathcal{G}_{2n+1}|$ & 1 & 2 & 4 & 6 & 9 & 11 & 15 & 17 & 21 & 24 & 28 & 30 & 36\\
\hline
\end{tabular}
\caption{The first $13$ terms of $(|\mathcal{G}_{2n+1}|)_{n=0}^\infty$.}
\label{tab2-averages}
\end{table}

Recall that for $n\in\mathbb{N}$, the function $\sigma_0(n)$ counts the number of positive divisors of $n$. Table \ref{tab2-averages} suggests a nonlinear recurrence.

\begin{thm}\label{mtsigma} For $n\in\mathbb{N}$, we have 
\begin{equation}|\mathcal{G}_{2n+1}|\ =\ |\mathcal{G}_{2n-1}|+\sigma_0(n).\end{equation}
\end{thm}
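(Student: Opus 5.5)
The plan is to classify all sets closed under integral $2$-averages explicitly, reduce $|\mathcal{G}_{2n+1}|$ to a lattice-point count, and read off the recurrence by taking differences.

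\textbf{Step 1 (structure lemma).} I will show that a finite nonempty $F\subset\mathbb N$ is closed under integral $2$-averages if and only if $F$ is an arithmetic progression with odd common difference. Singletons count as such progressions.

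For sufficiency, write $F=\{a+id: 0\le i\le L-1\}$ with $d$ odd. Then $(a+id)+(a+jd)$ is even exactly when $i\equiv j \pmod 2$. In that case the average is $a+\frac{i+j}{2}d\in F$.

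For necessity, list $F$ as $a_1<a_2<\cdots<a_L$.
\begin{itemize}
\item Each gap $a_{i+1}-a_i$ is odd. Otherwise $a_i,a_{i+1}$ have the same parity, and their midpoint lies strictly between two consecutive elements of $F$, which is impossible.
\item Consecutive gaps are equal. Since two consecutive gaps are odd, $a_i$ and $a_{i+2}$ have the same parity. Their midpoint must lie in $F$ and strictly between them, so it equals $a_{i+1}$.
\end{itemize}
Hence all gaps equal a common odd $d$. I expect this lemma to be the main (though modest) obstacle; the rest is counting.

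\textbf{Step 2 (counting).} By Step 1, an element of $\mathcal{G}_{2n+1}$ is either the singleton $\{2n+1\}$ or a progression $\{2n+1-(L-1)d,\ldots,2n+1\}$ with $L\ge 2$ and $d$ odd. The singleton is Schreier since $2n+1\ge 1$.

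For $L\ge 2$, write $d=2e-1$ with $e\ge 1$ and $m=L-1\ge 1$. The Schreier condition $2n+1-(L-1)d\ge L$ becomes $2n+1-m(2e-1)\ge m+1$, that is, $me\le n$. Positivity of the minimum element is then automatic.

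Distinct pairs $(m,e)$ give distinct sets, and every such set arises this way. Therefore
\begin{equation*}
|\mathcal{G}_{2n+1}| \ =\ 1+\#\{(m,e)\in\mathbb N^2: me\le n\} \ =\ 1+\sum_{j=1}^{n}\sigma_0(j).
\end{equation*}

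\textbf{Step 3 (recurrence).} Subtracting the same formula with $n-1$ in place of $n$ gives $|\mathcal{G}_{2n+1}|-|\mathcal{G}_{2n-1}|=\sigma_0(n)$ for all $n\in\mathbb N$, which is Theorem~\ref{mtsigma}.

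As a sanity check against Table~\ref{tab2-averages}, the formula gives $1,2,4,6,9$ for $n=0,\dots,4$, matching the data.
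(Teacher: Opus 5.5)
Your proof is correct. It shares the key structural lemma with the paper but counts by a different route.

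\textbf{What the paper does.} It proves (Lemma~\ref{lem:evenodd}) that every $F\in\mathcal{G}_{2n+1}$ is an arithmetic progression with odd difference, and separately that $\min F\neq |F|+1$. It then splits $\mathcal{G}_{2n+1}$ into two parts:
\begin{itemize}
\item sets with $\min F\ge |F|+2$, which are exactly the images of $\mathcal{G}_{2n-1}$ under $F\mapsto F+2$;
\item maximal Schreier sets, which it matches with the divisors $a$ of $n$ via the progression with minimum $a+1$ and difference $2n/a-1$.
\end{itemize}
So the recurrence comes straight from two bijections, following the maximal/nonmaximal decomposition used throughout the paper.

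\textbf{What you do instead.} You prove the full characterization, including sufficiency, which the paper only checks for the specific sets in the images of $f_1$ and $f_2$. Your necessity argument also does not rely on $\max F$ being odd. You then enumerate $\mathcal{G}_{2n+1}$ outright and get the closed form $|\mathcal{G}_{2n+1}|=1+\sum_{j=1}^{n}\sigma_0(j)$. The recurrence is then a one-line difference.

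\textbf{How the two relate.} They are the same correspondence seen globally versus locally. In your parametrization $\min F-|F|=2(n-me)$, so:
\begin{itemize}
\item the paper's parity fact $\min F\neq|F|+1$ is automatic;
\item the maximal sets are exactly the pairs with $me=n$ (the paper's $a$ is your $m$, and its difference $2n/a-1$ is your $2e-1$);
\item the shift $F\mapsto F+2$ keeps $(m,e)$ fixed while raising $n$ by one.
\end{itemize}

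\textbf{What each buys.} Your version gives an explicit formula, hence growth of order $n\log n$ by Dirichlet's divisor estimate, and it makes the base case $n=0$ transparent. The paper's version explains the term $\sigma_0(n)$ directly as the number of maximal Schreier sets at level $2n+1$, without summing anything.
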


\section{Recurrences from counting $k$-clustered Schreier sets}\label{sect_k_clustered}

In this section, we study a clustered variant of Schreier sets. We decompose $\mathcal C_{k,n}$ into possibly overlapping subcollections depending on whether the Schreier sets are maximal and whether the element $n-k$ belongs to these sets. We then construct explicit bijections from earlier collections $\mathcal C_{k,m}$ to these subcollections. We first need to record a useful equivalent condition of  being $k$-clustered.

\begin{defi}\normalfont
Let $F\subset \mathbb N$ be finite and nonempty. A \textit{maximal block} of consecutive integers in $F$ is a set $\{u,u+1,\ldots,v\}\subseteq F$ such that $u-1\notin F$ and $v+1\notin F$.
\end{defi}

The next lemma follows immediately from definitions. 
 
\begin{lem}\label{lem:block-characterization}
A finite nonempty set $F\subset \mathbb N$ is $k$-clustered if and only if every maximal block of consecutive integers in $F$ has length at least $k$.
\end{lem}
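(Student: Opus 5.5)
The plan is to prove both implications directly from the definition of $k$-clustered, using the fact that the maximal blocks of $F$ partition $F$. First I record that partition. Since $F$ is finite, every $a\in F$ lies in exactly one maximal block. Set $u=\min\{x\le a: [x,a]\subseteq F\}$ and $v=\max\{y\ge a: [a,y]\subseteq F\}$; both exist because $F$ is finite and nonempty. Then $[u,v]\subseteq F$, while $u-1\notin F$ and $v+1\notin F$. Uniqueness holds because two maximal blocks containing $a$ have a union that is still an interval inside $F$, and the endpoint conditions then force the two blocks to coincide. Write $B(a)=[u,v]$ for this block.

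For ($\Leftarrow$), suppose every maximal block has length at least $k$. Given $a\in F$, take $I=B(a)$. It is an interval of consecutive integers contained in $F$, it contains $a$, and $|I|\ge k$. So $F$ is $k$-clustered.

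For ($\Rightarrow$), suppose $F$ is $k$-clustered and let $B=[u,v]$ be any maximal block. Take $a=u$ and choose an interval $I\subseteq F$ with $u\in I$ and $|I|\ge k$. The key observation is that any interval $I$ of consecutive integers with $I\subseteq F$ and $a\in I$ satisfies $I\subseteq B(a)$. Indeed, if $I$ contained an integer outside $[u,v]$, then by consecutiveness it would contain $u-1$ or $v+1$, and neither lies in $F$. Hence $|B|=|B(u)|\ge |I|\ge k$.

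The only step that needs any care is this containment of intervals in the maximal block. It follows at once from the endpoint conditions $u-1,v+1\notin F$ together with the fact that an interval containing a point of $[u,v]$ and a point outside it must contain $u-1$ or $v+1$. No step poses a real obstacle, which matches the paper's remark that the lemma follows immediately from definitions.
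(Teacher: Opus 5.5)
Your proof is correct and is essentially the paper's approach: the paper gives no written argument and simply says the lemma follows immediately from the definitions, and you have carried out exactly that routine unpacking. The one point needing care, that any interval $I \subseteq F$ meeting a maximal block $[u,v]$ lies inside it because $u-1, v+1 \notin F$, is handled correctly.
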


For $n\ge 2k+2$, we decompose $\mathcal C_{k,n}$ into three subcollections:
\begin{align}
    \mathcal{C}_{k, n}^1 &\ :=\ \{ F \subset [n]\, :\, n \in F, F \mbox{ is nonmaximal Schreier, and } F \mbox{ is }k\mbox{-clustered} \},\\
    \mathcal{C}_{k, n}^2 &\ :=\ \{ F \subset [n]\, :\, n\in F, n-k \in F, F \mbox{ is Schreier, and } F \mbox{ is }k\mbox{-clustered} \}, \mbox{ and} \\
    \mathcal{C}_{k, n}^3 &\ :=\ \{ F \subset [n] \,:\, n \in F, n-k \notin F, F\mbox{ is maximal Schreier, and } F \mbox{ is }k\mbox{-clustered} \}.
\end{align}

Define 
\begin{align}
    &f_1: \mathcal{C}_{k, n-1} \rightarrow \mathcal{C}_{k, n}^1, \quad A \mapsto A + 1,\\
    &f_2: \mathcal{C}_{k, n-2} \rightarrow \mathcal{C}_{k, n}^2, \quad A \mapsto (A + 1) \cup \{ n \},\\
    &f_3: \mathcal{C}_{k, n-(2k+1)} \rightarrow \mathcal{C}_{k, n}^3 , \quad A \mapsto (A-(\min A - |A|-k)) \cup [n-k+1, n],\mbox{ and}\\
    &f_4: \mathcal{C}_{k, n-3} \rightarrow \mathcal{C}_{k, n}^1 \cap \mathcal{C}_{k, n}^2,\quad A \mapsto (A + 2) \cup \{ n \}.
\end{align}

\begin{lem}\label{lem:f1-bijection}
    The map $f_1$ is a bijection.
\end{lem}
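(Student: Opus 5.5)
To prove that $f_1:\mathcal{C}_{k,n-1}\to\mathcal{C}^1_{k,n}$, $A\mapsto A+1$, is a bijection, I will check three things in turn: $f_1$ lands in $\mathcal{C}^1_{k,n}$, it is injective, and it is surjective. In each step I will use Lemma~\ref{lem:block-characterization} for the clustering condition.

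\emph{Well-definedness.} Take $A\in\mathcal{C}_{k,n-1}$. Since $n-1\in A$, we get $n\in A+1$, and clearly $A+1\subset[2,n]\subset[n]$. For the Schreier condition, $\min(A+1)=\min A+1\ge |A|+1>|A|=|A+1|$, so $A+1$ is nonmaximal Schreier. Translation by $1$ maps maximal blocks of $A$ bijectively onto maximal blocks of $A+1$ and preserves their lengths. Hence, by Lemma~\ref{lem:block-characterization}, $A+1$ is $k$-clustered, and so $f_1(A)\in\mathcal{C}^1_{k,n}$.

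\emph{Injectivity.} This is immediate, since translation is injective on sets.

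\emph{Surjectivity.} Take $F\in\mathcal{C}^1_{k,n}$ and set $A=F-1$. Because $F$ is nonmaximal, $\min F\ge |F|+1\ge 2$, so $A\subset[n-1]$ and $n-1\in A$. Also $\min A=\min F-1\ge |F|=|A|$, so $A$ is Schreier. Again translation preserves maximal blocks and their lengths, so $A$ is $k$-clustered. Therefore $A\in\mathcal{C}_{k,n-1}$ and $f_1(A)=F$.

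The only point needing care is in surjectivity: the strict inequality $\min F>|F|$ is exactly what makes $\min F\ge 2$ (so $A\subset\mathbb N$) and keeps $A$ Schreier after shifting down. Everything else is routine bookkeeping.
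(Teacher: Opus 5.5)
Your proof is correct and follows the paper's argument essentially verbatim. Well-definedness comes from $\min(A+1)=\min A+1>|A+1|$, injectivity from translation, and surjectivity from $F-1$, with nonmaximality giving $\min(F-1)\ge|F-1|$. Your extra remarks only make explicit what the paper leaves implicit: that $\min F\ge 2$ keeps $F-1\subset\mathbb N$, and that Lemma~\ref{lem:block-characterization} justifies translation preserving $k$-clustering.
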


\begin{proof}
    Pick $A\in \mathcal{C}_{k, n-1}$. If $A$ is $k$-clustered, then $f_1(A) = A+1$ is $k$-clustered. Since $\max A = n-1$, we have $\max f_1(A) = n$. Note that 
    \begin{equation}
        \min (A+1) \ =\ \min A + 1 \ >\ |A|\ =\ |A+1|,
    \end{equation}
    so $A+1$ is nonmaximal Schreier. Hence, the map $f_1$ is well-defined. 

    Injectivity is obvious from definition. To show surjectivity, we pick $B\in \mathcal{C}_{k, n}^1$ and let $A:=B-1$. Shifting by $1$ keeps the set $k$-clustered, so $A$ is $k$-clustered.  Furthermore, $\max B = n$ gives $\max A = n-1$, and 
    \begin{equation}
        \min A \ =\ \min B - 1\ \ge\ (|B|+1) - 1\ =\ |B| \ =\ |A|
    \end{equation}
    means that $A$ is Schreier. Therefore, the set $A$ is in $\mathcal{C}_{k, n-1}$, and thus, $f_1$ is surjective.
\end{proof}

\begin{lem}\label{lem:f2-bijection}
    The map $f_2$ is a bijection.
\end{lem}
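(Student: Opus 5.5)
We show that $f_2$ is well-defined, injective, and surjective, in the same way as Lemma~\ref{lem:f1-bijection}. Throughout we use Lemma~\ref{lem:block-characterization} to check $k$-clusteredness through maximal blocks.

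\emph{Well-definedness.} Take $A\in\mathcal C_{k,n-2}$ and set $B=(A+1)\cup\{n\}$. Since $\max A=n-2$, we have $\max(A+1)=n-1$, so $n-1,n\in B$. Appending $n$ only lengthens the maximal block of $A+1$ that ends at $n-1$. Every other maximal block of $A+1$ is unchanged, and $A+1$ is $k$-clustered because $A$ is. Hence every maximal block of $B$ has length at least $k$, and $B$ is $k$-clustered by Lemma~\ref{lem:block-characterization}. For the Schreier condition,
\[
\min B\ =\ \min A+1\ \ge\ |A|+1\ =\ |B|.
\]
Finally, $n-k\in B$. The block of $A$ ending at $n-2$ has length at least $k$, so $[n-k-1,n-2]\subseteq A$. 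Therefore $[n-k,n-1]\subseteq A+1$, and in particular $n-k\in B$. Thus $B\in\mathcal C^2_{k,n}$.

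\emph{Injectivity.} This is immediate, since $A=(B\setminus\{n\})-1$.

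\emph{Surjectivity.} Take $B\in\mathcal C^2_{k,n}$ and let $A=(B\setminus\{n\})-1$. We first check that $n-1\in B$. Because $n\in B$ and $B$ is $k$-clustered with $k\ge2$, the maximal block containing $n$ has length at least $2$, so $n-1\in B$. Hence $\max A=n-2$. Removing $n$ shortens the last maximal block by exactly one, so we need this block to have length at least $k+1$. Here the hypothesis $n-k\in B$ is used. The last block contains $n$, and we claim it contains all of $[n-k,n]$. Suppose some $m$ with $n-k<m<n$ is missing from $B$. Then the block containing $n$ lies inside $[m+1,n]$, which has length less than $k$, a contradiction. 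So the block contains $[n-k+1,n]$, and together with $n-k\in B$ it contains $[n-k,n]$, which has length $k+1$.

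It follows that after removing $n$ the last block still has length at least $k$, and the other blocks are untouched; shifting by $-1$ does not change block lengths. So $A$ is $k$-clustered. It is nonempty because $n-1\in B$. For the Schreier condition,
\[
\min A\ =\ \min B-1\ \ge\ |B|-1\ =\ |A|.
\]
One caveat: if $\min B=n$, removing $n$ would change the minimum. This cannot happen, since $n-1\in B$, so $\min B\le n-1$ and $\min A=\min B-1$ as used above. Hence $A\in\mathcal C_{k,n-2}$ and $f_2(A)=B$.

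The only delicate step is the surjectivity claim that the final block has length at least $k+1$. It depends on the observation that $n-k\in B$ forces all of $[n-k,n]$ into $B$, which in turn uses the fact that $n$ lies in a block of length at least $k$.
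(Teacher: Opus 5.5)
Your proof is correct and follows the paper's route: you establish well-definedness through the block ending at $n-1$, get injectivity for free, and prove surjectivity with the same inverse $B\mapsto (B\setminus\{n\})-1$, checked via maximal blocks and the shifted Schreier inequality. You also justify two points the paper leaves implicit: that $n-k\in B$ forces $[n-k,n]\subseteq B$, and that removing $n$ does not change the minimum.
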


\begin{proof}
    Given a $k$-clustered set $A$ in $\mathcal{C}_{k, n-2}$, the set $(A+1)\cup\{n\}$ is also $k$-clustered because $n-1$ is in $A+1$. By definition, the set $f_2(A)$ contains $n$. Furthermore, since $n-2$ is in $A$, Lemma~\ref{lem:block-characterization} gives $n-(k+1)\in A$. It follows that $f_2(A)$ contains $n-k$. Finally,
    \begin{equation}
        \min f_2(A)\ =\ \min A + 1\ \ge\ |A| + 1\ =\ |f_2(A)|.
    \end{equation}
    We have verified that $f_2(A)$ is in $\mathcal{C}_{k,n}^{2}$, so $f_2$ is well-defined. 

    Injectivity is obvious from the definition of $f_2$. Let us show surjectivity. Pick $E\in \mathcal{C}^2_{k,n}$ and let $F:= (E\backslash \{n\})-1$. Since $E$ is $k$-clustered and $k\ge 2$, Lemma~\ref{lem:block-characterization} gives $n-1\in E$, so $\max F = n-2$. Furthermore, that $[n-k, n]\subset E$ implies that $[n-k-1, n-2]\subset F$, so $F$ is $k$-clustered. Last but not least, 
    \begin{equation}
        \min F\ =\ \min E-1\ \ge\ |E| - 1\ =\ |F|,
    \end{equation}
    so $F$ is Schreier. Therefore, the set $F$ is in $\mathcal{C}_{k, n-2}$, and thus, $f_2$ is surjective.
\end{proof}

\begin{lem}\label{lem:f3-bijection}
The map $f_3$ is a bijection.
\end{lem}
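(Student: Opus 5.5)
The plan is to check directly that $f_3$ is well-defined and then to write down an explicit inverse map, in the same spirit as Lemmas~\ref{lem:f1-bijection} and~\ref{lem:f2-bijection}. It helps to name the shift first. For $A\in\mathcal C_{k,n-(2k+1)}$, put $s:=\min A-|A|-k$ and $A':=A-s$. Then $A'$ is a translate of $A$ (possibly to the right, when $s<0$), so it is still $k$-clustered, and
\[
\min A' \;=\; |A|+k, \qquad \max A' \;=\; n-2k-1-s \;=\; n-k-1-(\min A-|A|)\;\le\; n-k-1,
\]
where the inequality uses that $A$ is Schreier.

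For well-definedness, I would argue as follows. Since $\max A'\le n-k-1$, the set $A'$ is disjoint from $[n-k,n]$, so $f_3(A)=A'\cup[n-k+1,n]$ satisfies $|f_3(A)|=|A|+k$. Its minimum is $\min A'=|A|+k$, so $f_3(A)$ is maximal Schreier. It also contains $n$ and omits $n-k$. By Lemma~\ref{lem:block-characterization}, its maximal blocks are those of $A'$ together with $[n-k+1,n]$. These cannot merge, because $n-k\notin f_3(A)$. All of them have length at least $k$, so $f_3(A)\in\mathcal C^3_{k,n}$. Injectivity is clear: $A'$ is recovered from $f_3(A)$ by deleting $[n-k+1,n]$, and $A$ is then determined by the condition $\max A=n-(2k+1)$.

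For surjectivity, take $B\in\mathcal C^3_{k,n}$.
\begin{itemize}
\item The maximal block of $B$ containing $n$ has length at least $k$, but it cannot contain $n-k$. Hence that block is exactly $[n-k+1,n]$, and $n-k\notin B$.
\item Let $A'':=B\setminus[n-k+1,n]$. This set is nonempty: otherwise $B=[n-k+1,n]$, and maximality would give $n-k+1=k$, which contradicts $n\ge 2k+2$.
\item $A''$ is $k$-clustered, since its maximal blocks are maximal blocks of $B$. Moreover $\min A''=\min B=|B|=|A''|+k$ and $\max A''\le n-k-1$.
\item Define $A:=A''+(n-2k-1-\max A'')$, so that $\max A=n-(2k+1)$.
\end{itemize}
The only real check is that $A$ is Schreier, and this follows from
\[
\min A \;=\; |A''|+k+n-2k-1-\max A'' \;\ge\; |A''| \;=\; |A|,
\]
which is equivalent to $\max A''\le n-k-1$. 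Finally, $\min A-|A|-k=n-2k-1-\max A''$, so the shift in $f_3$ exactly undoes this translation, and $f_3(A)=B$.

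I expect the main obstacle to be the surjectivity step. The key points there are that the final block must be exactly $[n-k+1,n]$, that the remaining part is nonempty (this is where $n\ge 2k+2$ is used), and that the gap $n-k\notin B$ is precisely what makes the preimage Schreier.
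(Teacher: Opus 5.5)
Your proof is correct and follows essentially the same route as the paper. Both check well-definedness via $\max A'\le n-k-1$, and both invert $f_3$ by stripping the final maximal block $[n-k+1,n]$ (nonempty remainder by $n\ge 2k+2$) and translating so that the maximum is $n-(2k+1)$; your injectivity argument (recover $A'$, then use $\max A=n-(2k+1)$) is just a rephrasing of the paper's comparison of $|A|$ and $A-\min A$.
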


\begin{proof}
    Let $A\in \mathcal{C}_{k, n-(2k+1)}$. The set $f_3(A)$ is $k$-clustered because it is the union of a shift of the $k$-clustered set $A$ with an interval of length $k$. Furthermore, the set $f_3(A)$ contains $n$ but not $n-k$ because 
    \begin{equation}
        \max (A - (\min A - |A| - k))\ =\ n-(2k+1) - \min A + |A| +k\ \le\ n - k - 1.
    \end{equation}
    Lastly, 
    \begin{equation}
        \min f_3(A)\ =\ |A| + k\ =\ |A - (\min A-|A|-k)| + |[n-k+1, n]|\ =\ |f_3(A)|,
    \end{equation}
    so $f_3(A)$ is maximal Schreier. These show that $f_3$ is well-defined. 

    To see that $f_3$ is injective, assume that $f_3(A) = f_3(B)$ for two sets $A$ and $B$ in $\mathcal{C}_{k, n-(2k+1)}$. Since $|f_3(A)| = |A| + k$ and $|f_3(B)| = |B| + k$, we have $|A| = |B|$. From the definition of $f_3$, the equality $f_3(A) = f_3(B)$ means 
    \begin{equation}\label{e10}
        A - \min A + |A| \ =\ B-\min B + |B|, \mbox{ so } A - \min A \ =\ B - \min B.
    \end{equation}
    It follows that $\max A - \min A = \max B - \min B$. Recall that $\max A = \max B = n-(2k+1)$, so $\min A = \min B$, which, together with \eqref{e10}, implies that $A = B$. 

    It remains to show that $f_3$ is surjective. Pick $E\in \mathcal{C}^3_{k,n}$. Since $E$ is $k$-clustered, $n\in E$, and $n-k\notin E$, Lemma~\ref{lem:block-characterization} implies that $[n-k+1,n]$ is a maximal block of consecutive integers in $E$. Let $G := E\backslash [n-k+1, n]$. The set $G$ is nonempty. Otherwise, $E=[n-k+1,n]$, and since $E$ is maximal Schreier, we would have $n-k+1=k$, contradicting $n\ge 2k+2$. Let $F := G + (n-2k-1)-\max G$.
    Then 
    \begin{align}
        f_3(F)&\ =\ (F-\min F + |F| + k) \cup [n-k+1, n]\\
        &\ =\ (G - \min G  + |G| + k)\cup [n-k+1, n]\\
        &\ =\ (E\backslash [n-k+1, n]-\min E + (|E| - k) + k)\cup [n-k+1, n]\ =\ E.
    \end{align}
    We now verify that $F\in \mathcal{C}_{k, n-(2k+1)}$. First, $F$ is $k$-clustered because $F$ is a shift of $G$, which is formed by removing the maximal block $[n-k+1, n]$ from $E$. Moreover,
    \begin{equation}
        \max F\ =\ \max(G+(n-2k-1)-\max G)\ =\ n-2k-1,
    \end{equation}
    and 
    \begin{align}
   \min F &\ =\ \min G + (n-2k-1)-\max G \ = \ \min E + (n-2k-1)-\max G\\
   &\ \ge\ |E| + (n-2k-1) - (n-k-1)
   \ =\ |E| - k \ =\ |F|.
    \end{align}
    Hence, the set $F$ is in $\mathcal{C}_{k, n-(2k+1)}$, and thus, $f_3$ is surjective. 
\end{proof}

\begin{lem}\label{lem:f4-bijection}
The map $f_4$ is a bijection.
\end{lem}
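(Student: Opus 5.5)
We will prove that $f_4$ is well-defined, injective, and surjective, following the pattern of Lemmas~\ref{lem:f1-bijection} and~\ref{lem:f2-bijection}. Here $f_4$ maps $\mathcal{C}_{k,n-3}$ to $\mathcal{C}^1_{k,n}\cap\mathcal{C}^2_{k,n}$.

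\emph{Well-definedness.} Take $A\in\mathcal{C}_{k,n-3}$. Then $\max(A+2)=n-1$, so $(A+2)\cup\{n\}$ simply extends the last maximal block of $A+2$ by the element $n$. By Lemma~\ref{lem:block-characterization}, the set is still $k$-clustered, and it contains $n$. Since $n-3\in A$ and $A$ is $k$-clustered, Lemma~\ref{lem:block-characterization} gives $[n-k-2,n-3]\subset A$. Hence $[n-k,n-1]\subset A+2$, so $n-k\in f_4(A)$. For the Schreier condition,
\begin{equation*}
\min f_4(A)\ =\ \min A+2\ \ge\ |A|+2\ >\ |A|+1\ =\ |f_4(A)|,
\end{equation*}
so $f_4(A)$ is nonmaximal Schreier. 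This places $f_4(A)$ in both $\mathcal{C}^1_{k,n}$ and $\mathcal{C}^2_{k,n}$. Injectivity is immediate, since $A$ is recovered as $(f_4(A)\setminus\{n\})-2$.

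\emph{Surjectivity.} Take $E\in\mathcal{C}^1_{k,n}\cap\mathcal{C}^2_{k,n}$ and set $F:=(E\setminus\{n\})-2$. Because $n,n-k\in E$ and $E$ is $k$-clustered, the maximal block containing $n$ has length at least $k$, and therefore $[n-k+1,n]\subset E$. Combined with $n-k\in E$, this gives $[n-k,n]\subset E$. Removing $n$ thus shortens a block of length at least $k+1$ to one of length at least $k$, namely one containing $[n-k,n-1]$, and leaves all other blocks unchanged. So $F$ is $k$-clustered with $\max F=n-3$. The Schreier condition uses nonmaximality, $\min E\ge|E|+1$:
\begin{equation*}
\min F\ =\ \min E-2\ \ge\ |E|-1\ =\ |F|.
\end{equation*}
Note that $\min E<n$ holds because $n-k\in E$, so $\min F=\min E-2$ is legitimate. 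Hence $F\in\mathcal{C}_{k,n-3}$ and $f_4(F)=E$.

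\emph{Main obstacle.} The only delicate step is checking that removing $n$ preserves $k$-clusteredness, which requires the block ending at $n$ to have length at least $k+1$. This is exactly where membership in $\mathcal{C}^2_{k,n}$, i.e.\ $n-k\in E$, is used, while membership in $\mathcal{C}^1_{k,n}$ supplies the extra unit needed for the Schreier inequality after the shift by $2$.
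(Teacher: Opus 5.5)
Your proposal is correct and follows essentially the same argument as the paper. In both, the key step is that $n,n-k\in E$ together with Lemma~\ref{lem:block-characterization} force $[n-k,n]\subset E$, so deleting $n$ and shifting down by $2$ preserves $k$-clusteredness, while nonmaximality supplies the extra unit needed for the Schreier inequality.
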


\begin{proof}
    Let $A\in \mathcal{C}_{k,n-3}$ and set $E:=f_4(A)=(A+2)\cup\{n\}$. Since $A$ is $k$-clustered and $\max A = n-3$, Lemma~\ref{lem:block-characterization} implies that $[n-k-2,n-3]\subset A$, so $[n-k,n-1]\subset A+2$. Hence, $n-k\in E$, and by construction, $n\in E$.

    Shifting by $2$ keeps the set $k$-clustered, and adding $n$ only extends the last maximal block of consecutive integers because $n-1\in A+2$. Hence, $E$ is $k$-clustered. 
    
    Lastly, we have
    \begin{equation}
        \min E\ =\ \min A+2\ \ge\ |A|+2\ >\ |E|.
    \end{equation}
    Therefore, $E$ is nonmaximal Schreier. We have shown that $E\in \mathcal{C}_{k,n}^{1}\cap \mathcal{C}_{k,n}^{2}$, so $f_4$ is well-defined.

    Injectivity is obvious from the definition of $f_4$. We show that $f_4$ is surjective. Pick $E\in \mathcal{C}_{k,n}^{1}\cap \mathcal{C}_{k,n}^{2}$ and let $F:= E\backslash\{n\}-2$. Since $E\in \mathcal{C}_{k,n}^{2}$, we have $n,n-k\in E$. Lemma~\ref{lem:block-characterization} implies that $[n-k+1,n]\subset E$, so $[n-k,n]\subset E$. It follows that
    $F$ is $k$-clustered with $\max F = n-3$ and $[n-k-2,n-3]\subset F$.
    Finally, since $E$ is nonmaximal Schreier, we have
        \begin{equation}\min F\ =\ \min E-2\ \ge\ (|E|+1)-2\ =\ |E|-1\ =\ |F|.\end{equation}
    Therefore, the set $F$ is Schreier and thus, is in $\mathcal{C}_{k,n-3}$. We conclude that $f_4$ is surjective. 
\end{proof}

\begin{proof}[Proof of Theorem \ref{thm:k-clustered-recurrence}]
    Let $n\in [1,2k-2]$. If $F$ is $k$-clustered and nonempty, then $|F| \ge k$. But $F\subset [\min F, n]$, so
    \begin{equation}k\ \le\ |F|\ \le\ n-\min F +1\ \le\ (2k-2)-\min F + 1.\end{equation}
    Hence, $\min F \le k-1$, which contradicts the Schreier condition that $\min F\ge |F|\ge k$. Therefore, $|\mathcal{C}_{k,n}| = 0$ for all $n\le 2k-2$.

    For $F\in \mathcal{C}_{k,2k-1}$, we have $\min F\ge |F| \ge k$. Hence, the only 
    set in $\mathcal{C}_{k, 2k-1}$ is $[k, 2k-1]$, so $|\mathcal{C}_{k, 2k-1}| = 1$. 
    
    For $F\in \mathcal{C}_{k, 2k}$, we have $\min F\ge |F| \ge k$. Hence, the only set in
    $\mathcal{C}_{k, 2k}$ is $[k+1, 2k]$, so $|\mathcal{C}_{k,2k}| = 1$.

     For $F\in \mathcal C_{k,2k+1}$, we have $[k+2,2k+1]\subset F$. Since $|F|\ge k$, we must have $\min F\ge k$. If $k\in F$, then $k+1\in F$ and $\min F = k<k+2=|F|$, which violates the Schreier condition. If $k+1\in F$, then $\min F = k+1 = |F|$. Hence, the two sets in $\mathcal{C}_{k,2k+1}$ are $[k+2, 2k+1]$ and $[k+1, 2k+1]$. 
    
    Let $n\ge 2k+2$. 
    By Lemmas \ref{lem:f1-bijection}, \ref{lem:f2-bijection}, \ref{lem:f3-bijection}, and \ref{lem:f4-bijection}, we have
    \begin{align}
    |\mathcal{C}_{k,n}|&\ =\ |\mathcal{C}^{(1)}_{k,n}| + |\mathcal{C}^{(2)}_{k,n}| + |\mathcal{C}^{(3)}_{k,n}| - |\mathcal{C}^{(1)}_{k,n}\cap \mathcal{C}^{(2)}_{k,n}|\\
    &\ =\ |\mathcal{C}_{k,n-1}| + |\mathcal{C}_{k,n-2}| + |\mathcal{C}_{k,n-(2k+1)}| - |\mathcal{C}_{k,n-3}|.
    \end{align}
\end{proof}

\section{Recurrences from counting $k$-consecutive-free Schreier sets}\label{sect_k-free}
In this section, we study Schreier sets that do not contain intervals of length $k$, with a fixed $k \in \mathbb{N}$. 
We first record two lemmas to compute the initial terms of $(|\mathcal{E}_{k,n}|)_{n=1}^\infty$.

    \begin{lem}\label{lem: initial1-consecutive-free}
    For $k\ge 2$ and $1\le n\le 2k-2$, we have $\mathcal{E}_{k,n}=\mathcal{S}_n$, and thus, $|\mathcal{E}_{k,n}| = F_n$.  
    \end{lem}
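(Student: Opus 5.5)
Since $\mathcal{E}_{k,n}\subseteq\mathcal{S}_n$ by definition, it suffices to prove the reverse inclusion $\mathcal{S}_n\subseteq\mathcal{E}_{k,n}$ for $1\le n\le 2k-2$. The count $|\mathcal{E}_{k,n}|=F_n$ then follows immediately from Bird's identity \eqref{bfs}.

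Take $F\in\mathcal{S}_n$ and suppose, for contradiction, that $F$ contains a block of $k$ consecutive integers, so $[m,m+k-1]\subset F$ for some $m\in F$. Two inequalities then hold. First, $|F|\ge k$, and the Schreier condition gives $\min F\ge |F|\ge k$. Second, since $F\subset[\min F,n]$ and $n\le 2k-2$, we get
\begin{equation}
k\ \le\ |F|\ \le\ n-\min F+1\ \le\ 2k-1-\min F.
\end{equation}
This forces $\min F\le k-1$, which contradicts $\min F\ge k$. Hence no such block exists, $F\in\mathcal{E}_{k,n}$, and the two collections coincide.

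This is essentially the same counting argument used for the vanishing range in the proof of Theorem~\ref{thm:k-clustered-recurrence}. The only point needing care is that the defining condition of $\mathcal{E}_{k,n}$ quantifies over $m\in F$; this is harmless, because $m$ is the left endpoint of the block and so automatically lies in $F$. I do not expect any real obstacle.
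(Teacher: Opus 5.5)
Your proposal is correct and follows essentially the same argument as the paper. Both combine $|F|\le n-\min F+1$ with $n\le 2k-2$ and the Schreier condition; the paper phrases it directly as $|F|\le k-\tfrac12<k$, while you phrase it as a contradiction with $\min F\ge k$. Both then conclude via Bird's identity \eqref{bfs}.
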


    \begin{proof}
    The inclusion $\mathcal{E}_{k,n}\subseteq \mathcal{S}_n$ is immediate from definitions. 
    Conversely, let $F\in \mathcal{S}_n$, we have
    $|F| \le  n - \min F + 1$ because $F\subset [\min F, n]$.
    Using $n \le 2k-2$ and the Schreier condition, we obtain
    \begin{equation}
    |F| \ \le\  2k-2 - \min F + 1\  \le \ 2k-2 - |F| + 1\  =\  2k - 1 - |F|.
    \end{equation}
    Thus, $|F| \le k - 1/2 < k$. Hence, $F$ contains no $k$ 
    consecutive integers, meaning $F\in \mathcal{E}_{k,n}$. Therefore,
    $\mathcal{E}_{k,n}=\mathcal{S}_n$, and so, $|\mathcal{E}_{k,n}| = |\mathcal{S}_n| = F_n$ due to \eqref{bfs}.
    \end{proof}

    \begin{lem}\label{lem: initial2-consecutive-free}
   For $k\ge 2$, we have $\mathcal{E}_{k,2k-1}=\mathcal{S}_{2k-1}\backslash\{\{k,k+1,\ldots,2k-1\}\}$, and thus $|\mathcal{E}_{k,2k-1}| = F_{2k-1}-1$.

    \end{lem}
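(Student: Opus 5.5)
The plan is to establish the two inclusions separately, as in the proof of Lemma~\ref{lem: initial1-consecutive-free}, and then read off the count from \eqref{bfs}.

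\textbf{Inclusion into the right-hand side.} We have $\mathcal{E}_{k,2k-1}\subseteq \mathcal{S}_{2k-1}$ directly from the definitions. The set $[k,2k-1]$ has minimum $k$ and size $k$, so it is (maximal) Schreier and contains $2k-1$. It is excluded from $\mathcal{E}_{k,2k-1}$ by taking $m=k$, since then $[m,m+k-1]=[k,2k-1]$ lies in the set. So $\mathcal{E}_{k,2k-1}$ is contained in $\mathcal{S}_{2k-1}\backslash\{[k,2k-1]\}$.

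\textbf{Reverse inclusion.} Take $F\in\mathcal{S}_{2k-1}$ with $F\neq[k,2k-1]$. We need $F$ to contain no block of $k$ consecutive integers. We repeat the counting argument of Lemma~\ref{lem: initial1-consecutive-free} with $n=2k-1$. From $F\subset[\min F,2k-1]$ and $\min F\ge |F|$ we get
\begin{equation*}
|F|\ \le\ 2k-1-\min F+1\ \le\ 2k-|F|,
\end{equation*}
so $|F|\le k$. If $|F|<k$, then $F$ cannot contain $k$ consecutive integers, and we are done.

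\textbf{The boundary case $|F|=k$.} Here all the inequalities above are equalities, so $\min F=k$. Then $F$ is a $k$-element subset of $[k,2k-1]$, which itself has exactly $k$ elements, forcing $F=[k,2k-1]$. This contradicts the choice of $F$, so this case does not occur. Hence $F\in\mathcal{E}_{k,2k-1}$.

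Combining the two inclusions gives the claimed set equality. Since $[k,2k-1]\in \mathcal{S}_{2k-1}$, removing it lowers the count by exactly one, and \eqref{bfs} gives $|\mathcal{E}_{k,2k-1}|=F_{2k-1}-1$.

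The only delicate step is the equality case $|F|=k$, and the squeeze argument above settles it. Everything else is routine.
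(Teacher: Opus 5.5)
Your proof is correct and follows the same approach as the paper, which simply notes that $[k,2k-1]$ is the only $k$-element set in $\mathcal{S}_{2k-1}$ and subtracts one from $F_{2k-1}$. You also spell out two points the paper leaves implicit: the bound $|F|\le k$ for every $F\in\mathcal{S}_{2k-1}$, and the check that $[k,2k-1]$ really is excluded from $\mathcal{E}_{k,2k-1}$.
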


    \begin{proof}  Due to the Schreier condition, the only set in $\mathcal{S}_{2k-1}$ that has $k$ elements is $\{k, k+1, \ldots, 2k-1\}$. Therefore, $|\mathcal{E}_{k,2k-1}| = |\mathcal{S}_{2k-1}|-1 = F_{2k-1} - 1$ thanks to \eqref{bfs}.
    \end{proof}

To prove the recurrence in Theorem \ref{thm:k-consecutive-free-recurrence}, we introduce the following definition. 
        \begin{defi}\label{def:p-initial}\normalfont
        For $p\in\mathbb{N}$, a set $F\subset \mathbb{N}$ is \textit{$p$-initial} if
        \begin{equation}\{\min F, \min F +1,\ldots,\min F+p-1\}\subset F\quad\text{ and }\quad \min F+p\notin F.\end{equation}
    \end{defi}
For $n\ge 2k$, we decompose $\mathcal{E}_{k,n}$ into $k$ subsets:
    \begin{align}
        &\mathcal{E}_{k,n}^1\ =\ \{F\subset [n]: n\in F, \min F >|F|,\text{ and $F$ is $k$-consecutive-free}\},\\
        &\mathcal{E}_{k,n}^2\ =\ \{F\subset [n]: n\in F, \min F =|F|,\text{$F$ is $k$-consecutive-free, and $F$ is 1-initial}\},\\
        &\mathcal{E}_{k,n}^3\ =\ \{F\subset [n]: n\in F, \min F =|F|,\text{$F$ is $k$-consecutive-free, and $F$ is 2-initial}\},\\
        &\quad\vdots\nonumber\\
        &\mathcal{E}_{k,n}^i\ =\ \{F\subset [n]: n\in F, \min F =|F|,\text{$F$ is $k$-consecutive-free, and $F$ is $(i-1)$-initial}\},\\
        &\quad\vdots\nonumber\\
        &\mathcal{E}_{k,n}^k\ =\ \{F\subset [n]: n\in F, \min F =|F|,\text{$F$ is $k$-consecutive-free, and $F$ is $(k-1)$-initial}\}.
    \end{align}

We prove Theorem \ref{thm:k-consecutive-free-recurrence} by constructing bijective maps from $\mathcal{E}_{k,n-2i+1}$ to $\mathcal{E}_{k,n}^i$. Define
    \begin{align}
        &f_1:\mathcal{E}_{k,n-1}\to \mathcal{E}_{k,n}^1,\quad F\mapsto F+1 \mbox{ and}\\
&f_i: \mathcal{E}_{k, n-(2i-1)}\rightarrow \mathcal{E}_{k,n}^i, \quad F\mapsto (\{i-1,  i, \ldots, 2i-3\} + |F|)\cup (F+(2i-1)), \mbox{ for }i\in [2, k].
    \end{align}
    
    \begin{lem}\label{lem:f1-consecutive-free}
        The map $f_1$ is a bijection.
    \end{lem}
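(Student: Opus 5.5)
We argue exactly as in Lemma~\ref{lem:f1-bijection}: first check that $f_1$ is well defined, then that it is injective, and finally that it is surjective by exhibiting the inverse $B\mapsto B-1$.

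\emph{Well-definedness.} Take $F\in\mathcal{E}_{k,n-1}$.
\begin{itemize}
\item Since $\max F=n-1$, we get $n\in F+1$ and $F+1\subset[n]$.
\item The Schreier condition for $F$ gives $\min(F+1)=\min F+1>|F|=|F+1|$, which is the strict inequality defining $\mathcal{E}^1_{k,n}$.
\item Translation preserves intervals: $[m,m+k-1]\subset F+1$ if and only if $[m-1,m+k-2]\subset F$. Hence $F+1$ is $k$-consecutive-free.
\end{itemize}
So $f_1(F)\in\mathcal{E}^1_{k,n}$. Injectivity is immediate, because translation is injective on sets.

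\emph{Surjectivity.} Take $B\in\mathcal{E}^1_{k,n}$ and set $A:=B-1$.
\begin{itemize}
\item Since $\min B>|B|\ge 1$, we have $\min B\ge 2$, so $A\subset\mathbb N$. Also $\max A=n-1$, so $A\subset[n-1]$ and $n-1\in A$.
\item $\min A=\min B-1\ge (|B|+1)-1=|A|$, so $A$ is Schreier.
\item By the same translation remark, $A$ is $k$-consecutive-free.
\end{itemize}
Thus $A\in\mathcal{E}_{k,n-1}$ and $f_1(A)=B$.

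The only point needing any care is that $B-1$ stays inside $\mathbb N$. This follows from the strict inequality $\min B>|B|$, which is why $f_1$ lands in the nonmaximal part $\mathcal{E}^1_{k,n}$. Nothing in the argument uses $n\ge 2k$ beyond the sets being defined.
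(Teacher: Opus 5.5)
Your proof is correct and matches the paper's argument essentially step for step. Well-definedness comes from $\min(F+1)=\min F+1>|F|$ and the fact that translation preserves being $k$-consecutive-free; injectivity is clear; surjectivity uses $E\mapsto E-1$ together with $\min E\ge |E|+1$. Your extra check that $\min B\ge 2$, so that $B-1\subset\mathbb N$, is a small detail the paper leaves implicit.
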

    
    \begin{proof}
        Let $F\in\mathcal{E}_{k,n-1}$. Note that $n\in f_1(F)$, and
        \begin{equation}\min f_1(F)\ =\ \min F +1\ \ge\ |F|+1\ >\ |F|\ =\ |f_1(F)|.\end{equation}  In addition, since $F$ is $k$-consecutive-free, the set $f_1(F)$ is also $k$-consecutive-free. Therefore, the map $f_1$ is well-defined.
        
        Injectivity of $f_1$ is obvious from the definition. To show surjectivity, let $E\in\mathcal{E}_{k,n}^1$ and $F:=E-1$. Since $\max E = n$, we have $\max F = n-1$. Moreover, since $E$ is nonmaximal Schreier, $\min E \ge |E|+1$, so
        \begin{equation}\min F \ =\ \min E - 1\ \ge\ |E|\ =\ |F|.\end{equation}
        In addition, $F$  is $k$-consecutive-free because $E$ is $k$-consecutive-free. Therefore, $F\in \mathcal{E}_{k,n-1}$, and $f_1({F}) = E$. Hence, the map $f_1$ is surjective.
        
        We have shown that $f_1$ is a bijection.\end{proof}
        
    \begin{lem}\label{lem:fs-consecutive-free}
        The maps $f_i$, with $2\le i\le k$, are bijections.
    \end{lem}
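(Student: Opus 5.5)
The plan is to verify directly that each $f_i$ with $2\le i\le k$ is well-defined and injective, and then to produce an explicit inverse. This mirrors the proof of Lemma~\ref{lem:f1-consecutive-free}. Throughout, fix $F\in\mathcal{E}_{k,n-(2i-1)}$ and write $B_F:=\{i-1,\ldots,2i-3\}+|F|=[|F|+i-1,\,|F|+2i-3]$, an interval of length $i-1$, so that $f_i(F)=B_F\cup(F+(2i-1))$.

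\emph{Well-definedness.} The key observation is that the Schreier condition $\min F\ge |F|$ gives $\min(F+(2i-1))\ge |F|+2i-1$. Hence $B_F$ and $F+(2i-1)$ are disjoint, and the integer $|F|+2i-2$ lies in neither of them. I will then check the following properties of $E:=f_i(F)$ in turn.
\begin{itemize}
\item $|E|=|F|+i-1=\min E$, so $E$ is maximal Schreier.
\item $\max E=\max F+2i-1=n$.
\item $E$ is $(i-1)$-initial, because $B_F$ starts at $\min E$ and $\min E+(i-1)=|F|+2i-2\notin E$.
\item $E$ is $k$-consecutive-free, because $B_F$ is a maximal block of length $i-1\le k-1$ separated by a gap from the rest, and the rest is a translate of the $k$-consecutive-free set $F$.
\end{itemize}

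\emph{Injectivity.} If $f_i(F)=f_i(F')$, comparing sizes gives $|F|=|F'|$, so $B_F=B_{F'}$. Removing this block from both sides and shifting by $-(2i-1)$ gives $F=F'$.

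\emph{Surjectivity.} Take $E\in\mathcal{E}^i_{k,n}$ and put $m:=\min E=|E|$. Being $(i-1)$-initial means $[m,m+i-2]\subset E$ and $m+i-1\notin E$. Define $G:=E\setminus[m,m+i-2]$ and $F:=G-(2i-1)$. I must check the following.
\begin{itemize}
\item $G\neq\emptyset$. Otherwise $E=[m,m+i-2]$ has $i-1$ elements, so $m=i-1$ and $n=\max E=2i-3<2k$, contradicting $n\ge 2k$.
\item $F$ is Schreier and consists of positive integers. Since $m+i-1\notin E$, we have $\min G\ge m+i$, so $\min F\ge m-i+1=|E|-(i-1)=|F|\ge 1$.
\item $\max F=n-(2i-1)$, and $F$ is $k$-consecutive-free as a translate of a subset of $E$ obtained by removing a full maximal block.
\end{itemize}
Finally, $B_F=[|F|+i-1,|F|+2i-3]=[m,m+i-2]$, so $f_i(F)=E$.

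The step I expect to need the most care is the surjectivity bookkeeping, namely ruling out $G=\emptyset$ and confirming $\min F\ge |F|$. Both hinge on using $m+i-1\notin E$ together with $n\ge 2k$. The rest is routine.
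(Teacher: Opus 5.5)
Your proposal is correct and follows the paper's proof essentially step for step. Well-definedness rests on the same gap observation $\min(F+(2i-1))\ge |F|+2i-1$, and surjectivity uses the same inverse $F=(E\setminus[|E|,|E|+i-2])-(2i-1)$, with the empty case ruled out via $n=2i-3<2k$. You also spell out two points the paper leaves implicit: recovering $|F|$ from $|f_i(F)|$ for injectivity, and checking $\min F\ge |F|\ge 1$.
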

    
    \begin{proof}
    Fix $i\in [2, k]$. 
        We show that $f_i$ is well-defined. Let $F\in\mathcal{E}_{k,n-(2i-1)}$. Then 
        \begin{equation}f_i(F)\ =\ (\{i-1,i,\ldots,2i-3\}+|F|)\cup(F+(2i-1)).\end{equation}
       Note that
        \begin{equation}
            \min (F+(2i-1)) \ \ge\ |F|+2i-1\ >\ \max(\{i-1, i, \ldots, 2i-3\}+|F|)+1.
        \end{equation}
        Hence, $|f_i(F)| = i-1+|F| = \min f_i(F)$, so $f_i(F)$ is maximal Schreier. 

Since $|F|+2i-2\notin f_i(F)$, the set $f_i(F)$ is $(i-1)$-initial. Furthermore, $f_i(F)$ is $k$-consecutive-free because 
$F+(2i-1)$ is $k$-consecutive-free, and 
\begin{equation}\min (F+(2i-1)) \ >\ \max(\{i-1,i,\ldots,2i-3\}+|F|) + 1.\end{equation}
We have verified that $f_i(F)\in\mathcal{E}_{k,n}^i$, so $f_i$ is well-defined.

        Injectivity of $f_i$ is obvious from the definition. To show surjectivity, let $E\in\mathcal{E}_{k,n}^i$. Then $E$ is maximal Schreier and $(i-1)$-initial, so $\{|E|,|E|+1,\ldots,|E|+i-2\}\subseteq E$ and $|E|+i-1\not\in E$. Let 
        \begin{equation}F\ :=\ (E\backslash \{|E|,|E|+1,\ldots,|E|+i-2\})-(2i-1).\end{equation}
        We have
        \begin{equation}\max E\ >\  |E|+i-2;\end{equation}
        otherwise, $\max E = |E| + i - 2$ implies that $E = \{|E|,|E|+1,\ldots,|E|+i-2\}$, so $|E| = i-1$. It follows that
        \begin{equation}2k \ \le\ n\ =\ \max E \ =\ 2i-3\ \le\ 2k-3,\end{equation}
        a contradiction. Hence, $n-(2i-1)\in F$. 
        Moreover, 
        \begin{align}
            \min F &\ =\ \min (E\backslash\{|E|,|E|+1,\ldots,|E|+i-2\})-(2i-1)\\
            &\ \ge\ |E|+i-(2i-1)\ =\ |E|-(i-1)\ =\ |F|.
        \end{align}
        Hence, $F$ is Schreier. Lastly, $F$ is $k$-consecutive-free because $E$ is $k$-consecutive-free. Therefore, $F\in\mathcal{E}_{k,n-(2i-1)}$, and so, $f_i(F) = E$ gives that $f_i$ is surjective.
    \end{proof}

\begin{proof}[Proof of Theorem \ref{thm:k-consecutive-free-recurrence}]
The first statement is due to Lemmas \ref{lem: initial1-consecutive-free} and \ref{lem: initial2-consecutive-free}.
For the recurrence, we use Lemmas \ref{lem:f1-consecutive-free} and \ref{lem:fs-consecutive-free} to have
    \begin{equation}
        |\mathcal{E}_{k,n}|\ =\ \sum_{i=1}^k|\mathcal{E}_{k,n}^i|\ =\ \sum_{i=1}^k |\mathcal{E}_{k,n-(2i-1)}|.
    \end{equation}
\end{proof}


\section{Recurrences from counting $k$-neighbored Schreier sets}\label{sect_k_neigh}

We now study a variant of Schreier sets in which every element is required to have a nearby neighbor. To prove the recurrence in Theorem \ref{thm:k-neighbored-recurrence}, we assume $n\ge k+3$ and partition $\mathcal D_{k,n}$ according to whether a set is maximal Schreier and whether the closest element to $n$ has a neighbor below. We then construct bijections from earlier collections to the pieces of this partition.

\subsection{A partition of $\mathcal{D}_{k,n}$ and $\mathcal{D}_{k,n-2}$}
We say that an element $a\in F$ has a \emph{neighbor below} if
\begin{equation}
[a-k,a-1]\cap F\ \neq\ \emptyset.
\end{equation}
Equivalently, $a$ has a neighbor below if there exists $b\in F$ with $a-k\le b<a$. Define
\begin{equation}
\mathcal D^0_{k,n}\ :=\ \{F\in \mathcal D_{k,n}: F\text{ is nonmaximal Schreier}\}
\end{equation}
and define for $1\le i\le k$, 
\begin{equation}\mathcal D^i_{k,n}\ :=\
\left\{
F\in\mathcal D_{k,n}\,\middle|\,
\begin{aligned}
&F\text{ is maximal Schreier}\\
&n-i\in F\\
&[n-i+1,n-1]\cap F=\emptyset\\
&n-i\text{ has a neighbor below}
\end{aligned}
\right\},\end{equation}
and for $k+1\le i\le 2k$, 
\begin{equation}\mathcal D^{i}_{k,n}\ :=\
\left\{
F\in\mathcal D_{k,n}\;\middle|\;
\begin{aligned}
&F\text{ is maximal Schreier}\\
&n-i+k\in F\\
&[n-i+k+1,n-1]\cap F=\emptyset\\
&n-i+k\text{ has no neighbor below}
\end{aligned}
\right\}.
\end{equation}

Next, we  partition $\mathcal D_{k,n-2}$ according to how far its minimum is from its size. For $1\le i\le k$, define
\begin{equation}
\mathcal M^i_{k,n-2}
\ :=\
\{F\in \mathcal D_{k,n-2}\,:\, \min F=|F|+i-1\}
\end{equation}
and let
\begin{equation}
\mathcal M^{k+1}_{k,n-2}
\ :=\
\{F\in \mathcal D_{k,n-2}\,:\, \min F>|F|+k-1\}.
\end{equation}

\begin{prop}\label{prop:D-partitions}
The collections
\begin{equation}
\mathcal D^0_{k,n},\quad
\mathcal D^1_{k,n},\quad\ldots,\quad\mathcal D^k_{k,n},\quad
\mathcal D^{k+1}_{k,n},\quad \ldots,\quad \mathcal D^{2k}_{k,n}
\end{equation}
form a partition of $\mathcal D_{k,n}$; that is,
\begin{equation}\label{eq:D-partition}
\mathcal D_{k,n}
\ =\
\mathcal D^0_{k,n}\sqcup
\bigsqcup_{i=1}^{k}\mathcal D^i_{k,n}
\sqcup
\bigsqcup_{i=1}^{k}\mathcal D^{k+i}_{k,n}.
\end{equation}
Moreover, the collections
\begin{equation}
\mathcal M^1_{k,n-2},\quad \ldots,\quad\mathcal M^k_{k,n-2},\quad\mathcal M^{k+1}_{k,n-2}
\end{equation}
form a partition of $\mathcal D_{k,n-2}$; that is,
\begin{equation}\label{eq:M-partition}
\mathcal D_{k,n-2}
\ =\
\mathcal M^1_{k,n-2}\sqcup\cdots\sqcup
\mathcal M^k_{k,n-2}\sqcup
\mathcal M^{k+1}_{k,n-2}.
\end{equation}
\end{prop}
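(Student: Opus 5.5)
The plan is to verify disjointness and exhaustiveness directly from the definitions, treating the two partitions separately. The second partition is immediate. Every $F\in\mathcal D_{k,n-2}$ is Schreier, so $\min F-|F|\ge 0$. Hence exactly one of the following holds: $\min F-|F|=i-1$ for a unique $i\in[1,k]$, or $\min F-|F|\ge k$, i.e. $\min F>|F|+k-1$. The defining conditions of $\mathcal M^1_{k,n-2},\ldots,\mathcal M^{k+1}_{k,n-2}$ are therefore mutually exclusive and cover $\mathcal D_{k,n-2}$.

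For the first partition, the plan is to split $F\in\mathcal D_{k,n}$ first by whether it is nonmaximal Schreier. If it is, then $F\in\mathcal D^0_{k,n}$, and this class is disjoint from all the others, since those require maximal Schreier. Otherwise $F$ is maximal Schreier. In that case I would let $m:=\max(F\setminus\{n\})$, the element of $F$ closest to $n$ other than $n$.

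This $m$ exists and satisfies $n-k\le m\le n-1$, because $F$ is $k$-neighbored: $n$ has no element of $F$ above it, so it needs some $b\in F$ with $n-k\le b<n$. Write $m=n-j$ with $j\in[1,k]$. By construction $[n-j+1,n-1]\cap F=\emptyset$, so $n-j$ is exactly the ``closest element'' singled out in both families of conditions. Now split on whether $m$ has a neighbor below.
\begin{itemize}
\item If $m$ has a neighbor below, then $F$ satisfies the conditions of $\mathcal D^{j}_{k,n}$ with $i=j\in[1,k]$.
\item If $m$ has no neighbor below, set $i=j+k\in[k+1,2k]$. Then $n-i+k=n-j=m$, so $F$ satisfies the conditions of $\mathcal D^{i}_{k,n}$.
\end{itemize}
This shows every $F$ lies in at least one class.

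For disjointness among the maximal classes, note that in each $\mathcal D^i_{k,n}$ with $i\ge1$ the two conditions ``$x\in F$'' and ``$[x+1,n-1]\cap F=\emptyset$'' force $x=\max(F\setminus\{n\})$. Here $x=n-i$ when $i\le k$, and $x=n-i+k$ when $i>k$. So $F$ determines $x$ uniquely, and hence determines the offset $j=n-x\in[1,k]$. The neighbor-below condition then decides between $i=j$ and $i=j+k$, and these two alternatives are complementary. So at most one class contains $F$.

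The only step needing care is the existence of $m$ with $m\ge n-k$, which is where the $k$-neighbored hypothesis is used. One should also note that $F\ne\{n\}$, since a singleton cannot be $k$-neighbored; this guarantees $F\setminus\{n\}$ is nonempty. The hypothesis $n\ge k+3$ plays no essential role here beyond ensuring the indices make sense.
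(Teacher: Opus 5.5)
Your proposal is correct and takes essentially the same approach as the paper. Both arguments split by maximality, locate the element $n-j=\max(F\setminus\{n\})$ (which the $k$-neighbored condition places within distance $k$ of $n$), and then split on whether that element has a neighbor below. Your disjointness argument, that the conditions force $x=\max(F\setminus\{n\})$, is a slightly more explicit form of the paper's remark that ``the smallest $i$ is unique.''
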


\begin{proof}
The second statement is obvious from definitions. We prove \eqref{eq:D-partition}. 

By definition, each of the collections $\mathcal D^0_{k,n}$ and $(\mathcal D^{i}_{k,n})_{i=1}^{2k}$ is a subset of $\mathcal D_{k,n}$. Conversely, let $F\in \mathcal D_{k,n}$. If $F$ is nonmaximal Schreier, then $F\in \mathcal D^0_{k,n}$. Suppose $F$ is maximal Schreier. Since $n\in F$ and $F$ is $k$-neighbored, the element $n$ has a neighbor in $F\backslash\{n\}$ within distance at most $k$. Hence, there exists $i\in [1,k]$ such that $n-i\in F$. Choose the smallest such $i$. Then $n-i\in F$ and $[n-i+1,n-1]\cap F=\emptyset$. Finally, $n-i$ either has a neighbor below or does not. If the former, $F\in \mathcal D^i_{k,n}$; if the latter, $F\in \mathcal D^{k+i}_{k,n}$. Therefore, we have 
\begin{equation}\label{d_union}\mathcal D_{k,n}
\ =\
\mathcal D^0_{k,n}\cup
\bigcup_{i=1}^{2k}\mathcal D^i_{k,n}.\end{equation}

The union in \eqref{d_union} is disjoint because a set cannot be both maximal and nonmaximal Schreier. Furthermore, among maximal Schreier sets, the smallest $i$ is unique, and for each $i$, the integer $n-i$ either has a neighbor below or does not, but not both. 
\end{proof}

\subsection{Bijective maps}

We now define bijections between pieces of the above partition of $\mathcal{D}_{k,n}$ and earlier collections $\mathcal{D}_{k, n-i}$ and their partitions:

\begin{align}
&f_0:\mathcal D_{k,n-1}\to \mathcal D^0_{k,n},
\quad
F\mapsto F+1,\\
&f_i:\mathcal M^i_{k,n-2}\to \mathcal D^i_{k,n},
\quad
F\mapsto (F-(i-2))\cup\{n\}, \quad 1\le i\le k,\\
&f_{i}:\mathcal D_{k,n-i-3}\to \mathcal D^{i}_{k,n},
\quad
F\mapsto (F-(\min F-|F|-2))\cup\{n-i+k,n\},\quad k+1\le i\le 2k,\\
&\mbox{ and}\nonumber\\
&f_{2k+1}:\mathcal D_{k,n-k-2} \to \mathcal M^{k+1}_{k,n-2},
\quad
F\mapsto F+k.
\end{align}

Figure \ref{fig: k-neighbored} provides a visual description of our maps and their domains and ranges.

\begin{figure}[H]
    \centering
    \includegraphics[width=1\textwidth]{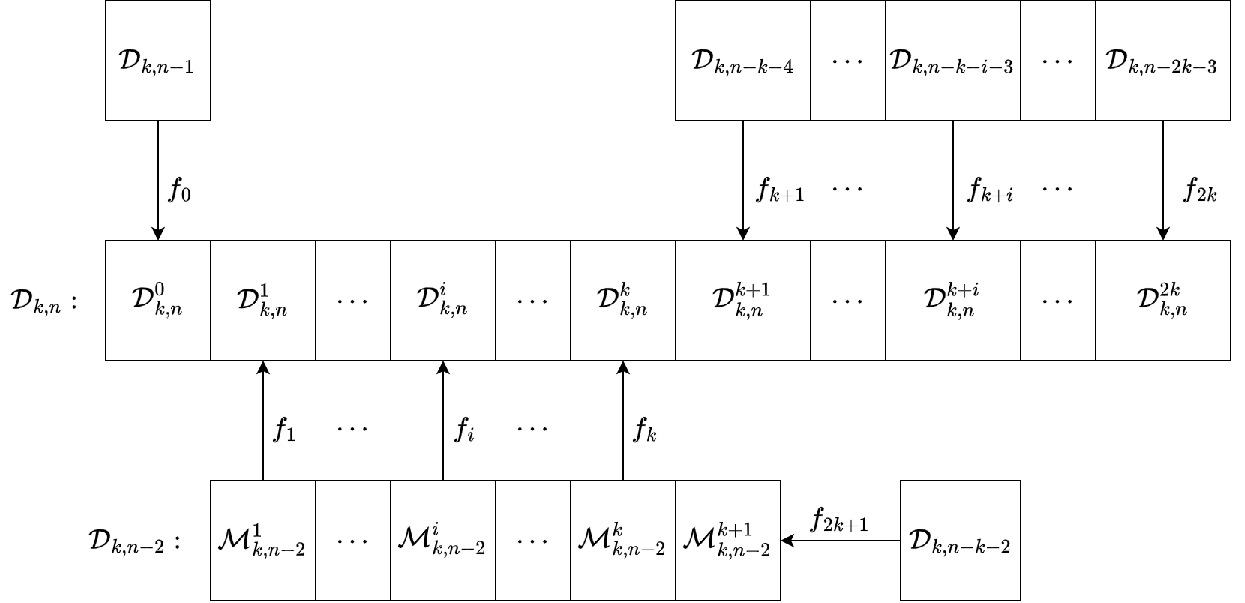}
    \caption{Visual summary of the bijections used in the proof of Theorem \ref{thm:k-neighbored-recurrence}. The diagram illustrates the domains and ranges of the bijections between a partition of $\mathcal{D}_{k,n}$ and the earlier collections $\mathcal{D}_{k,n-i}$.}
    \label{fig: k-neighbored}
\end{figure}
\begin{lem}\label{lem:f0-neighbored}
    The map $f_0$ is a bijection.
\end{lem}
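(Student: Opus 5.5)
We follow the template of Lemma~\ref{lem:f1-bijection} and Lemma~\ref{lem:f1-consecutive-free}: we check that $f_0$ is well-defined, note injectivity, and prove surjectivity via the inverse map $E\mapsto E-1$.

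\emph{Well-definedness.} Let $F\in\mathcal D_{k,n-1}$. Since $\max F=n-1$, we get $n\in F+1$ and $F+1\subset[n]$. For the Schreier condition,
\[
\min(F+1)\ =\ \min F+1\ \ge\ |F|+1\ >\ |F+1|,
\]
so $F+1$ is nonmaximal Schreier. The $k$-neighbored property is translation invariant: for $a+1\in F+1$, the witness $b\in F\backslash\{a\}$ with $|a-b|\le k$ yields $b+1\in (F+1)\backslash\{a+1\}$ with $|(a+1)-(b+1)|\le k$. Hence $F+1\in\mathcal D^0_{k,n}$.

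\emph{Injectivity.} This is immediate, since translation is injective.

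\emph{Surjectivity.} Take $E\in\mathcal D^0_{k,n}$ and put $F:=E-1$. Nonmaximality gives
\[
\min E\ \ge\ |E|+1\ \ge\ 2,
\]
so $F\subset\mathbb N$, and $F\subset[n-1]$ with $n-1\in F$. The Schreier condition holds because
\[
\min F\ =\ \min E-1\ \ge\ |E|\ =\ |F|.
\]
Translation invariance again shows $F$ is $k$-neighbored. Therefore $F\in\mathcal D_{k,n-1}$ and $f_0(F)=E$.

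There is no real obstacle. The only points needing care are that $F$ stays inside $\mathbb N$, which uses $\min E\ge 2$ from strict nonmaximality, and that the neighbor condition is preserved under shifts in both directions.
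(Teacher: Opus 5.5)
Your proof is correct and follows the same approach as the paper: you verify that $F\mapsto F+1$ lands in $\mathcal D^0_{k,n}$, note that injectivity is immediate, and recover the preimage as $E-1$ using $\min E\ge |E|+1$. Your explicit check that $\min E\ge 2$ keeps $E-1$ inside $\mathbb N$ is a small point the paper leaves implicit.
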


\begin{proof}
    Let $F\in \mathcal{D}_{k,n-1}$. As $F$ is $k$-neighbored, $f_0(F) = F+1$ must also be $k$-neighbored. In addition, $\max F = n-1$ implies that $\max f_0(F) = n$. We have
    \begin{align}
        \min f_0(F) \ =\ \min F+1\ >\ |F|\ =\ |f_0(F)|,
    \end{align}
    so $f_0(F)$ is nonmaximal Schreier. Hence, $f_0$ is well-defined. 
    
    Injectivity of $f_0$ is obvious from the definition. To show surjectivity, pick $E \in \mathcal{D}_{k,n}^0$ and define $G := E-1$. As before, $G$ is $k$-neighbored because $E$ is $k$-neighbored. Also, $\max G = n-1$ because $\max E = n$. Lastly, the set $G$ is Schreier since
    \begin{align}
    \min G\ =\ \min E-1\ \ge\ |E|\ =\ |G|.
    \end{align}
    We have verified that $G \in \mathcal{D}_{k,n-1}$, so $f_0$ is surjective.
\end{proof}

\begin{lem}\label{lem:fi-neighbored}
    The maps $f_i$, with $1\le i\le k$, are bijective.
\end{lem}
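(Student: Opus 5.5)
The plan is to verify directly that each $f_i$ is well-defined, injective, and surjective, as in Lemma~\ref{lem:f0-neighbored}. Injectivity is immediate, since $F$ is recovered from $f_i(F)$ by deleting $n$ and shifting back by $i-2$. So the work is in checking that $f_i$ lands in $\mathcal D^i_{k,n}$, and in showing that the natural inverse lands in $\mathcal M^i_{k,n-2}$.

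\emph{Well-definedness.} Fix $1\le i\le k$ and $F\in\mathcal M^i_{k,n-2}$, and put $E:=(F-(i-2))\cup\{n\}$.
\begin{itemize}
\item Since $\max F=n-2$, we get $\max (F-(i-2))=n-i<n$. Hence $n\in E$, $n-i\in E$, and $[n-i+1,n-1]\cap E=\emptyset$.
\item Since $\min F=|F|+i-1$, we get $\min E=|F|+1=|E|$. In particular $\min E\ge 1$, so $E\subset[n]$, and $E$ is maximal Schreier.
\item Neighbors in $E$:
\begin{itemize}
\item Every element of $F-(i-2)$ keeps its neighbor from the $k$-neighbored set $F$, because shifting preserves distances.
\item The element $n$ has the neighbor $n-i$, since $i\le k$.
\item The element $n-i$ has a neighbor below. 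Indeed, $n-2=\max F$ must have a neighbor in $F$, and that neighbor is necessarily below it; shifting carries it to a neighbor of $n-i$ below $n-i$.
\end{itemize}
\end{itemize}
Thus $E\in\mathcal D^i_{k,n}$.

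\emph{Surjectivity.} Given $E\in\mathcal D^i_{k,n}$, let $G:=(E\backslash\{n\})+(i-2)$.
\begin{itemize}
\item $G$ is nonempty because $n-i\in E$, and $\max G=n-2$.
\item Since $E$ is maximal Schreier, $\min G=\min E+i-2=|E|+i-2=|G|+i-1$. So $G$ is Schreier with the exact excess required for $\mathcal M^i_{k,n-2}$.
\end{itemize}

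The main obstacle is showing that $G$ is still $k$-neighbored after deleting $n$, since some elements of $E$ might have had $n$ as their only close neighbor. I would handle this as follows.
\begin{itemize}
\item The element $n-i$ has a neighbor below by the definition of $\mathcal D^i_{k,n}$, and that neighbor is not $n$.
\item Suppose $b\in E$ with $b<n-i$ relies on $n$, so $n-b\le k$. Then $0<(n-i)-b\le k-i<k$, so $n-i$ serves as a neighbor of $b$ instead.
\item No element lies in $[n-i+1,n-1]$, so these cases are exhaustive.
\end{itemize}
Hence $E\backslash\{n\}$ is $k$-neighbored, and so is its shift $G$. Therefore $G\in\mathcal M^i_{k,n-2}$ and $f_i(G)=E$, which completes the bijection.
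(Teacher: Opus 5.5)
Your proposal is correct and follows the paper's argument: you check well-definedness directly, get injectivity by undoing the shift, and prove surjectivity via the same inverse $G=(E\setminus\{n\})+(i-2)$. Your explicit check that any $b<n-i$ whose only neighbor was $n$ lies within $k-i<k$ of $n-i$ fills in the step the paper compresses into ``because $n-i$ has a neighbor below.''
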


\begin{proof}
    Fix $i\in [1,k]$ and a set $F\in \mathcal{M}_{k,n-2}^i$. As $F$ is $k$-neighbored and $n-2\in F$, the set $f_i(F)$ contains $n-i$, so $n$ has a neighbor. It follows that $f_i(F)$ is $k$-neighbored.
    Furthermore, $\max F = n-2$ implies that
    \begin{equation}[n-i+1, n-1]\cap f_i(F) \ =\ \emptyset,\end{equation}
    and $n-i$ has a lower neighbor in $f_i(F)$. By construction, $n \in f_i(F)$. Lastly, 
    \begin{align}
        \min f_i(F)\ =\ \min F-(i-2)\ =\ (|F|+i-1)-i+2\ =\ |F|+1\ =\ |f_i(F)|,
    \end{align}
    so $F$ is maximal Schreier. Hence, $f_i$ is well-defined. 
    
    Injectivity is obvious from the definition. To show surjectivity, pick $E \in \mathcal{D}_{k,n}^i$ and let $G := E\setminus \{n\}+i-2$. The set $G$ is $k$-neighbored because $n-i$ has a neighbor below, which implies that $E\backslash \{n\}$ is $k$-neighbored. Also, 
    \begin{equation}\max G \ =\ \max E\backslash\{n\} + i - 2\ =\ n-i+i-2\ =\ n-2.\end{equation}
    Finally, 
    \begin{equation}
    \min G\ =\ \min E+i-2\ =\ |E|+i-2\ =\ |G|+i-1.
    \end{equation}
    Hence, $G \in  \mathcal{M}^i_{k,n-2}$, so $f_i$ is surjective as $f_i(G)=E$.
\end{proof}

\begin{lem}\label{lem:fki-neighbored}
    The maps $f_i$, with $k+1\le i\le 2k$, are bijective.
\end{lem}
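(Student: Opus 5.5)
We prove that, for each fixed $i\in[k+1,2k]$, the map $f_i$ is well defined, injective and surjective. Write $m:=n-i+k$, so $m\in[n-k,n-1]$. The domain is $\mathcal D_{k,n-i-3}$, and for $F$ in it we set $G:=F-(\min F-|F|-2)$, so that $f_i(F)=G\cup\{m,n\}$. The key facts about $G$ are:
\begin{equation}
\min G\ =\ |F|+2,\qquad \max G\ =\ n-i-3-(\min F-|F|-2)\ \le\ n-i-1\ =\ m-k-1.
\end{equation}
The inequality uses the Schreier condition $\min F\ge |F|$.

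\textbf{Well-definedness.}
\begin{itemize}
\item[(a)] $G$ is a shift of $F$, so $G$ is $k$-neighbored. The elements $m$ and $n$ are within distance $n-m=i-k\le k$ of each other, so each is a neighbor of the other. Hence $f_i(F)$ is $k$-neighbored.
\item[(b)] Since $\max G\le m-k-1$, the element $m$ has no neighbor below; that is, $[m-k,m-1]\cap f_i(F)=\emptyset$. Since $\max G<m$, we also have $[m+1,n-1]\cap f_i(F)=\emptyset$.
\item[(c)] Because $m>\max G$, the sets $G$, $\{m\}$, $\{n\}$ are disjoint, so $|f_i(F)|=|F|+2=\min G=\min f_i(F)$. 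Thus $f_i(F)$ is maximal Schreier.
\end{itemize}
Together with $n\in f_i(F)$, this places $f_i(F)$ in $\mathcal D^{i}_{k,n}$.

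\textbf{Injectivity.} We copy the argument of Lemma~\ref{lem:f3-bijection}. If $f_i(F)=f_i(F')$, removing $\{m,n\}$ gives equal $G$'s. Equal sizes give $|F|=|F'|$, so $F-\min F=F'-\min F'$. Since $\max F=\max F'=n-i-3$, we get $\min F=\min F'$, and hence $F=F'$.

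\textbf{Surjectivity.} This is the main step. Take $E\in\mathcal D^{i}_{k,n}$ and let $H:=E\setminus\{m,n\}$.
\begin{itemize}
\item[(a)] \emph{$H$ is nonempty.} Otherwise $E=\{m,n\}$, and maximality forces $m=2$. But $m\ge n-k\ge 3$, a contradiction.
\item[(b)] \emph{$H$ lies far below $m$.} Since $[m+1,n-1]\cap E=\emptyset$ and $m$ has no neighbor below, every element of $H$ is at most $m-k-1=n-i-1$.
\item[(c)] \emph{$H$ is $k$-neighbored.} Any $a\in H$ has a neighbor $b\in E$ with $|a-b|\le k$. We must rule out $b\in\{m,n\}$. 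We have $a\le m-k-1$, so $m-a\ge k+1$ and $n-a>m-a\ge k+1$. Hence $b\in H$.
\end{itemize}
Now set $F:=H+(n-i-3)-\max H$. Then $F$ is $k$-neighbored with $\max F=n-i-3$. For the Schreier condition, using $\max H\le n-i-1$ and $\min H=\min E=|E|=|H|+2$,
\begin{equation}
\min F\ =\ \min H+(n-i-3)-\max H\ \ge\ |H|+2-2\ =\ |H|\ =\ |F|.
\end{equation}
So $F\in\mathcal D_{k,n-i-3}$. Finally, the shift in $f_i$ is normalized by the minimum: $\min F-|F|-2$ is exactly the amount that moves $\min F$ back to $|H|+2=\min H$. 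Therefore $F-(\min F-|F|-2)=H$, and $f_i(F)=H\cup\{m,n\}=E$.
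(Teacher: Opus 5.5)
Your proof is correct and follows the same route as the paper. You normalize the shift by $\min F-|F|-2$, use $\max\bigl(F-(\min F-|F|-2)\bigr)\le n-i-1$ to get the gap below $n-i+k$, and invert by removing $\{n-i+k,n\}$ and shifting the remainder so its maximum is $n-i-3$. Your version is in fact slightly cleaner than the paper's: you prove $E\setminus\{n-i+k,n\}\neq\emptyset$ by contradiction (the paper's displayed inequality $\min E\ge n-i+k$ only holds under that contradictory assumption), and you explicitly check that $n-i+k$ and $n$ are neighbors and that the remaining set stays $k$-neighbored.
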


\begin{proof}
     Fix $i\in [k+1,2k]$ and a set $F\in \mathcal{D}_{k,n-i-3}$. Since $F$ is $k$-neighbored, $f_i(F)$ is also $k$-neighbored. By construction, $n$ and $n-i+k$ are in $f_i(F)$, and 
     \begin{equation}\max(F-(\min F-|F|-2))\ =\ n-i-3 - (\min F-|F|)+2\ \le \ n-i-1,\end{equation}
     so $[n-i+k+1,n-1] \cap f_i(F)=\emptyset$, and $n-i+k$ does not have a neighbor below in $f_i(F)$. Lastly, $F$ is maximal Schreier because
    \begin{align}
        \min f_i(F)\ =\ |F|+2\ =\ |f_i(F)|.
    \end{align}
    Hence, $f_i$ is well-defined. 
    
    We prove injectivity of $f_i$. Suppose that $f(A) = f(B)$ for $A, B\in \mathcal{D}_{k,n-i-3}$, i.e.,
    \begin{equation}(A-(\min A-|A|-2))\cup\{n-i+k,n\}\ =\ (B-(\min B-|B|-2))\cup\{n-i+k,n\}.\end{equation}
    Hence, $A - \min A = B- \min B$, which, together with $\max A = \max B = n-i-3$, implies that $\min A = \min B$. Therefore, the two sets $A$ and $B$ are equal.

    To show surjectivity, pick $E \in \mathcal{D}_{k,n}^i$. Let $G := E\setminus \{n-i+k,n\}$ and $F:=G-\max G+n-i-3$. Note that $G$ is not empty because $n\ge k+3$, so 
    \begin{equation}|E| \ =\ \min E \ \ge\ n-i+k\ \ge\ (k+3)-2k+k \ =\ 3.\end{equation}
    Since $E$ is $k$-neighbored, and $n-i+k$ has no neighbor below, the set $F$ is also $k$-neighbored. Note that 
    \begin{equation}\max F \ =\ \max G-\max G+n-i-3 \ =\ n-i+3,\end{equation}
    and
    \begin{align}
    \min F&\ =\ \min E-\max G+n-i-3\\
    &\ =\ |E|-\max G +n-i-3\\ 
    &\ \ge\  |E|-(n-i-1)+n-i-3\\
    &\ =\ |E|-2\ =\ |F|.
    \end{align}
    Hence, $F \in \mathcal{D}_{k,n-i-3}$. It remains to show that $f_i(F)=E$. We have
    \begin{align}
        f_i(F) &\ =\  (F-(\min F-|F|-2))\cup\{n-i+k,n\}\\
        &\ =\ (G-\min G+|G|+2)\cup \{n-i+k,n\}\\
        &\ =\  (G-\min E+|E|)\cup \{n-i+k,n\}\\
        &\ =\  (E\setminus \{n-i+k,n\})\cup \{n-i+k,n\}\ =\ E.
    \end{align}

    We have shown that $f_i$ is bijective.
\end{proof}

\begin{lem}\label{lem:f2k1-neighbored}
    The map $f_{2k+1}$ is a bijection.
\end{lem}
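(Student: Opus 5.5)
We show that $f_{2k+1}\colon F\mapsto F+k$ is well-defined, injective, and surjective, following the pattern of Lemma \ref{lem:f0-neighbored}. Here $\mathcal M^{k+1}_{k,n-2}$ is the set of $F\in\mathcal D_{k,n-2}$ with $\min F\ge |F|+k$.

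\emph{Well-definedness.} Take $F\in \mathcal D_{k,n-k-2}$. Translation preserves distances, so $F+k$ is $k$-neighbored. We have $\max(F+k)=n-k-2+k=n-2$. The Schreier condition $\min F\ge |F|$ gives
\begin{equation}
\min(F+k)\ =\ \min F+k\ \ge\ |F|+k\ =\ |F+k|+k\ >\ |F+k|+k-1.
\end{equation}
Thus $F+k$ is Schreier and lies in $\mathcal M^{k+1}_{k,n-2}$.

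\emph{Injectivity and surjectivity.} Injectivity is clear, since translation is injective. For surjectivity, pick $E\in\mathcal M^{k+1}_{k,n-2}$ and set $G:=E-k$. Then $G$ is $k$-neighbored, $\max G=n-k-2$, and
\begin{equation}
\min G\ =\ \min E-k\ \ge\ (|E|+k)-k\ =\ |G|.
\end{equation}
The inequality uses that $\min E>|E|+k-1$, i.e.\ $\min E\ge |E|+k$ for integers.

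The one point needing care is that $G\subset\mathbb N$, i.e.\ $\min G\ge 1$. This follows because $E$ is nonempty ($n-2\in E$), so $\min G\ge |G|\ge 1$. Hence $G\in\mathcal D_{k,n-k-2}$ and $f_{2k+1}(G)=E$.

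No step is a real obstacle; the only subtlety is converting the strict inequality defining $\mathcal M^{k+1}_{k,n-2}$ into $\min E\ge |E|+k$, which is exactly what makes the shift back by $k$ land in a Schreier set. If $n-k-2\le 0$, both sides are empty under the convention of Theorem \ref{thm:k-neighbored-recurrence}: a $k$-neighbored Schreier set with maximum $n-2$ has at least two elements, so $\min E\ge |E|+k\ge k+2$ would force $n-2>k+2$.
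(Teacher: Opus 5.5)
Your proposal is correct and follows essentially the same argument as the paper: translate by $k$, check that the maximum, the $k$-neighbored property and the Schreier inequality $\min(F+k)\ge |F+k|+k$ hold in each direction, and use $\min E\ge |E|+k$ to get surjectivity. Your extra remarks are harmless. The check that $\min G\ge 1$ is simply implicit in the paper. The treatment of the case $n-k-2\le 0$ is unnecessary, since the section assumes $n\ge k+3$.
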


\begin{proof}
    Let $F\in \mathcal{D}_{k,n-k-2}$. Since $F$ is $k$-neighbored, $f_{2k+1}(F)$ is also $k$-neighbored. Since $\max F = n-k-2$, we have $\max f_{2k+1}(F) = n-2$. Lastly, 
    \begin{align}
        \min f_{2k+1}(F)\ =\ \min F+k\ \geq\ |F|+k\ >\ |f_{2k+1}(F)|+k-1.
    \end{align}
    Hence, the map $f_{2k+1}$ is well-defined.

    Injectivity is obvious from the definition. To show surjectivity, pick $E\in \mathcal{M}_{k,n-2}^{k+1}$ and define $G := E-k$. Since $E$ is $k$-neighbored, $G$ is $k$-neighbored. That $\max E = n-2$ gives $\max G = n-k-2$. Lastly, 
    \begin{align}
        \min G\ =\ \min E-k\ \geq\ |E|\ =\ |G|.
    \end{align}
    Hence, the set $G$ is in $\mathcal{D}_{k,n-k-2}$, and $f_{2k+1}$ is surjective because $f_{2k+1}(G)=E$.
\end{proof}

\subsection{Proof of Theorem \ref{thm:k-neighbored-recurrence}}

We now use the above bijections to prove Theorem~\ref{thm:k-neighbored-recurrence}.

\begin{proof}[Proof of Theorem \ref{thm:k-neighbored-recurrence}]
Suppose that $n\le k+2$. Let $F\in \mathcal{D}_{k,n}$. We have $|F|\ge 2$ because $F = \{n\}$ is not $k$-neighbored and thus, $\mathcal{D}_{k,n}\subset \mathcal{S}_{n}\backslash \{n\}$. Conversely, if $F\in\mathcal{S}_{n}\backslash \{n\}$, then $|F|\ge 2$ implies that $\min F\ge 2$. Hence,
\begin{equation}\max F - \min F\ \le\ n - 2\ \le\ (k+2)-2 \ =\ k,\end{equation}
so $F$ is $k$-neighbored. Due to \eqref{bfs}, we have
\begin{equation}|\mathcal{D}_{k,n}|\ =\ |\mathcal{S}_{n}\backslash \{n\}|\ =\ F_n - 1.\end{equation}

We prove the second statement of Theorem \ref{thm:k-neighbored-recurrence}.
    By Proposition~\ref{prop:D-partitions},
    \begin{equation}\label{eq:D-count-decomposition}
    |\mathcal D_{k,n}|
    \ =\
    |\mathcal D^0_{k,n}|
    +
    \sum_{i=1}^{k}|\mathcal D^i_{k,n}|
    +
    \sum_{i=k+1}^{2k}|\mathcal D^{i}_{k,n}|.
    \end{equation}
    Lemma~\ref{lem:f0-neighbored} gives a bijection between $\mathcal D_{k,n-1}$  and $\mathcal D^0_{k,n}$, so
    \begin{equation}\label{eq:D0-count}
    |\mathcal D^0_{k,n}|\ =\ |\mathcal D_{k,n-1}|.
    \end{equation}
    Lemma~\ref{lem:fi-neighbored} gives bijections between $\mathcal M^i_{k,n-2}$ and $\mathcal D^i_{k,n}$ for $1\le i\le k$. Hence, by Proposition~\ref{prop:D-partitions},
    \begin{equation}
    \sum_{i=1}^{k}|\mathcal D^i_{k,n}|\ =\ \sum_{i=1}^{k}|\mathcal M^i_{k,n-2}| \ =\ |\mathcal D_{k,n-2}|-|\mathcal M^{k+1}_{k,n-2}|.
    \label{eq:Di-count}
    \end{equation}
    Lemma~\ref{lem:f2k1-neighbored} gives the bijection between $\mathcal D_{k,n-k-2}$ and $\mathcal M^{k+1}_{k,n-2}$. Therefore
    \begin{equation}\label{eq:M-count}
    |\mathcal M^{k+1}_{k,n-2}|\ =\ |\mathcal D_{k,n-k-2}|.
    \end{equation}
    Combining \eqref{eq:Di-count} and \eqref{eq:M-count}, we get
    \begin{equation}\label{eq:Di-final-count}
    \sum_{i=1}^{k}|\mathcal D^i_{k,n}|\ =\ |\mathcal D_{k,n-2}|-|\mathcal D_{k,n-k-2}|.
    \end{equation}
    Finally, Lemma~\ref{lem:fki-neighbored} gives bijections between $\mathcal D_{k,n-i-3}$ and $\mathcal D^{i}_{k,n}$ for $k+1\le i\le 2k$. Thus,
    \begin{equation}
    \sum_{i=k+1}^{2k}|\mathcal D^{i}_{k,n}|\ =\ \sum_{i=k+1}^{2k}|\mathcal D_{k,n-i-3}|\ =\ \sum_{i=k+4}^{2k+3}|\mathcal D_{k,n-i}|.
    \label{eq:Dki-count}
    \end{equation}
    Substituting \eqref{eq:D0-count}, \eqref{eq:Di-final-count}, and \eqref{eq:Dki-count} into \eqref{eq:D-count-decomposition} gives
    \begin{equation}
    |\mathcal D_{k,n}|\ =\ |\mathcal D_{k,n-1}|+|\mathcal D_{k,n-2}|-|\mathcal D_{k,n-k-2}|+
    \sum_{i=k+4}^{2k+3}|\mathcal D_{k,n-i}|,
    \end{equation}
   as desired. 
\end{proof}


\section{Recurrences from counting $k$-isolated Schreier sets}\label{sect_k_isolated}

We investigate Schreier sets whose elements are more than $k$ apart. 
To prove the recurrence in Theorem \ref{thm:k-isolated-recurrence}, we decompose $\mathcal{I}_{k,n}$ into two subcollections of maximal and nonmaximal Schreier sets. Let
\begin{align}
    &\mathcal{I}_{k,n}^1\ :=\ \{F\subset[n]:n\in F, F \mbox{ is nonmaximal Schreier, and } F \mbox{ is } k \mbox{-isolated}\}\mbox{ and}\\
    &\mathcal{I}_{k,n}^2\ :=\ \{F\subset[n]:n\in F, F \mbox{ is maximal Schreier, and } F \mbox{ is } k \mbox{-isolated}\}.
\end{align}
For $n\ge k+3$, define
\begin{align}
    &f_1:\mathcal{I}_{k,n-1}\to\mathcal{I}_{k,n}^1,\quad F\mapsto F+1\mbox{ and}\\
    &f_2:\mathcal{I}_{k,n-k-2}\to\mathcal{I}_{k,n}^2,\quad F\mapsto \{|F|+1\}\cup (F+k+2).
\end{align}
It suffices to prove that these maps are bijective.

    \begin{lem}\label{lem:f1-isolated}
        The map $f_1$ is a bijection.
    \end{lem}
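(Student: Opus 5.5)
I will follow the same template as Lemma~\ref{lem:f1-bijection}, Lemma~\ref{lem:f1-consecutive-free} and Lemma~\ref{lem:f0-neighbored}. The argument has three steps: show that $f_1$ is well-defined, note that it is injective, and show that the inverse shift $E\mapsto E-1$ maps $\mathcal{I}^1_{k,n}$ back into $\mathcal{I}_{k,n-1}$.

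\emph{Well-definedness.} Take $F\in\mathcal{I}_{k,n-1}$.
\begin{itemize}
\item Since $\max F=n-1$, we get $\max(F+1)=n$, so $n\in f_1(F)\subset[n]$.
\item Translation preserves differences between elements. So if no two elements of $F$ are within distance $k$, the same holds for $F+1$, and $F+1$ is $k$-isolated.
\item The Schreier condition becomes strict:
\begin{equation*}
\min(F+1)\ =\ \min F+1\ \ge\ |F|+1\ >\ |F|\ =\ |F+1|.
\end{equation*}
\end{itemize}
Hence $f_1(F)$ is nonmaximal Schreier and lies in $\mathcal{I}^1_{k,n}$. Injectivity is immediate, since translation is injective on sets.

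\emph{Surjectivity.} Take $E\in\mathcal{I}^1_{k,n}$ and put $F:=E-1$.
\begin{itemize}
\item Nonmaximality gives $\min E\ge|E|+1\ge 2$. So $F\subset\mathbb{N}$ is nonempty, with $\max F=n-1$ and $F\subset[n-1]$.
\item $F$ is $k$-isolated, again because translation preserves pairwise distances.
\item $\min F=\min E-1\ge|E|=|F|$, so $F$ is Schreier.
\end{itemize}
Thus $F\in\mathcal{I}_{k,n-1}$ and $f_1(F)=E$.

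There is no real obstacle. The only point needing a moment's care is that $E-1$ does not contain $0$, and the strict inequality $\min E>|E|$ rules this out.
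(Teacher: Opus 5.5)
Your proposal is correct and follows essentially the same route as the paper. Like the paper, you check that $F+1$ lies in $\mathcal{I}^1_{k,n}$ (maximum, strict Schreier inequality, translation-invariance of $k$-isolation), note injectivity, and invert by $E\mapsto E-1$; your remark that $\min E>|E|$ keeps $E-1$ inside $\mathbb{N}$ is a small point the paper leaves implicit.
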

    
    \begin{proof}
        For $F\in\mathcal{I}_{k,n-1}$, since $\max F = n-1$, we have $\max f_1(F) = n$. Note that
        \begin{equation}\min f_1(F)\ =\ \min F +1\  \ge\  |F| +1\ >\ |F|\ =\ |f_1(F)|.\end{equation}
        Hence, the set $f_1(F)$ is nonmaximal Schreier. Since $F$ is $k$-isolated, we know that $f_1(F) = F+1$ is also $k$-isolated. Therefore, the map $f_1$ is well-defined.

        Injectivity of $f_1$ is obvious from the definition. To show surjectivity, let $E\in\mathcal{I}_{k,n}^1$ and let $F:=E-1$. Since $E\subset[n]$ and $n\in E$, we have $F\subset[n-1]$ and $n-1\in F$. Note that $E$ is nonmaximal Schreier,  so $\min E\ge |E|+1$, and
        \begin{equation}\min F \ =\ \min E -1\ \ge\ |E|\ =\ |F|.\end{equation}
        Therefore, $F$ is a Schreier set. Moreover, since $E$ is $k$-isolated, $F$ is also $k$-isolated. Hence, $F\in \mathcal{I}_{k,n-1}$, and $f_1(F) = E$ gives that $f_1$ is surjective.
    \end{proof}
    
    \begin{lem}\label{lem:f2-isolated}
        The map $f_2$ is a bijection.
    \end{lem}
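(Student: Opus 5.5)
We prove that $f_2:\mathcal{I}_{k,n-k-2}\to\mathcal{I}_{k,n}^2$, $F\mapsto \{|F|+1\}\cup(F+k+2)$, is bijective for $n\ge k+3$. The plan is to check well-definedness, then injectivity, then surjectivity by an explicit inverse.

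\emph{Well-definedness.} Take $F\in\mathcal{I}_{k,n-k-2}$ and set $E:=f_2(F)$.
\begin{itemize}
\item Since $\max F=n-k-2$, we get $\max E=n$.
\item We have $\min(F+k+2)=\min F+k+2\ge |F|+k+2$, so $|F|+1$ lies strictly below every element of $F+k+2$. The gap is
\begin{equation}
\min(F+k+2)-(|F|+1)\ \ge\ k+1.
\end{equation}
Hence $|F|+1$ is at distance more than $k$ from every other element of $E$.
\item The set $F+k+2$ is $k$-isolated because $F$ is. Together with the previous point, $E$ is $k$-isolated.
\item The new element is not already in $F+k+2$, so $|E|=|F|+1=\min E$. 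Thus $E$ is maximal Schreier, and $E\in\mathcal{I}_{k,n}^2$.
\end{itemize}

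\emph{Injectivity.} Suppose $f_2(A)=f_2(B)$. Comparing minima gives $|A|=|B|$. Removing the minimum from both sides and shifting down by $k+2$ gives $A=B$.

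\emph{Surjectivity.} Take $E\in\mathcal{I}_{k,n}^2$, so $\min E=|E|$, and define
\begin{equation}
F\ :=\ (E\setminus\{\min E\})-(k+2).
\end{equation}

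The main obstacle is checking that $F$ is nonempty and Schreier; this is where the hypothesis $n\ge k+3$ is used.

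\begin{itemize}
\item \emph{Nonempty.} If $E=\{n\}$, then $n=\min E=|E|=1$, contradicting $n\ge k+3$. So $|E|\ge 2$ and $F\neq\emptyset$.
\item \emph{Elements are positive.} By $k$-isolation, the second smallest element of $E$ is at least $\min E+k+1=|E|+k+1$. So every element of $F$ is at least $|E|-1\ge 1$, and $F\subset\mathbb N$.
\item \emph{Schreier.} The same bound gives $\min F\ge |E|-1=|F|$.
\item \emph{Maximum and isolation.} $\max F=n-k-2$, and $F$ is $k$-isolated as a shift of a subset of a $k$-isolated set.
\end{itemize}

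Hence $F\in\mathcal{I}_{k,n-k-2}$. Finally $|F|+1=|E|=\min E$, so $f_2(F)=\{\min E\}\cup(E\setminus\{\min E\})=E$. Therefore $f_2$ is surjective, and hence a bijection.
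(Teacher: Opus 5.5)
Your proof is correct and follows the paper's argument essentially step for step: the same gap estimate $\min(F+k+2)-(|F|+1)\ge k+1$ gives well-definedness, and the same inverse $E\mapsto (E\setminus\{\min E\})-(k+2)$ gives surjectivity, with $k$-isolation bounding the second-smallest element of $E$. You also make explicit why $E\neq\{n\}$ and why $F\subset\mathbb N$, which the paper leaves implicit.
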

    
    \begin{proof}
        For each $F\in\mathcal{I}_{k,n-k-2}$, since $\max F = n-k-2$, we have $\max f_2(F) = n$. Note that
        \begin{equation}\min (F+k+2) - (|F| + 1)\ \ge\  k+1.\end{equation}
        Hence, $f_2(F)$ is $k$-isolated, and
        \begin{equation}\min f_2(F)\  =\  |F|+1\ =\ |f_2(F)|,\end{equation}
        so $f_2(F)$ is maximal Schreier. 
        We have shown that $f_2(F)\in\mathcal{I}_{k,n}^2$. Therefore, $f_2$ is well-defined.

        Injectivity of $f_2$ is obvious from the definition. To show the surjectivity of $f_2$, let $E\in \mathcal{I}_{k,n}^2$ and let $F:=(E\backslash\{\min E\})-k-2$. Since $E$ is maximal Schreier, we have 
        \begin{align}
            f_2(F) &\ =\ (F+k+2)\cup \{|F|+1\}\\
            &\ =\ (E\backslash\{\min E\})\cup \{|E|\}\ =\ (E\backslash\{\min E\})\cup \{\min E\}\ =\ E.
        \end{align}
        Note that $\max F = n-k-2$ because $\max E = n$ and $E\neq \{n\}$, while 
        \begin{align}
            \min F &\ =\ \min (E\backslash\{\min E\})-k-2\\
            &\ \ge\ (\min E + k+1)-k-2\ =\ |E|-1\ =\ |F|.
        \end{align}
        implies the set $F$ is Schreier. Furthermore, $F$ is $k$-isolated because $E$ is $k$-isolated. Therefore, $F\in\mathcal{I}_{k,n-k-2}$. We have thus verified that $f_2$ is surjective.
    \end{proof}

 \begin{proof}[Proof of Theorem \ref{thm:k-isolated-recurrence}]
 We first compute the initial terms  $(|\mathcal{I}_{k,n}|)_{n=1}^{k+2}$. Let $1\le n\le k+2$. Clearly, the singleton $\{n\}$ is in $\mathcal{I}_{k,n}$. We claim that there is no $F\in\mathcal{I}_{k,n}$ with $|F|\ge 2$. Indeed, since $F$ is Schreier,
\begin{equation*}
\min F\ \ge\ |F|\ \ge\ 2.
\end{equation*}
Hence, 
\begin{equation*}
n-\min F\ \le\ n-2\ \le\ k,
\end{equation*}
which contradicts that $F$ is $k$-isolated. Therefore, we have
$|\mathcal{I}_{k,n}| = 1$, if $1\le n\le k+2$. For $n\geq k+3$, by
 Lemmas \ref{lem:f1-isolated} and \ref{lem:f2-isolated}, we have
 \begin{equation}
 |\mathcal{I}_{k,n}|\ =\ |\mathcal{I}^1_{k,n}| + |\mathcal{I}^2_{k,n}|\ =\ |\mathcal{I}_{k, n-1}| + |\mathcal{I}_{k, n-k-2}|.
 \end{equation}
 \end{proof}   

\section{Schreier sets that are closed under $2$-averages}\label{sect_2_averages}

To prove Theorem \ref{mtsigma}, we employ two interesting properties of sets in $\mathcal{G}_{2n+1}$. If $F$ is in $\mathcal{G}_{2n+1}$, then $\min F - |F|\neq 1$; furthermore, $F$ must be an arithmetic progression. 

\begin{lem}\label{lem:evenodd}
For all $F \in \mathcal{G}_{2n+1}$, we have $\min F \neq |F| + 1$. Furthermore, 
there exists an odd $q$ such that
\begin{equation}\max_i F \ =\ \max_{i+1} F + q\mbox{ for all }i\in [1, |F| - 1],\end{equation}
 where $\max_i F$ denotes the $i$th largest element of the set $F\subset \mathbb{N}$.
\end{lem}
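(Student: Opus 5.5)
The plan is to prove the arithmetic-progression statement first and then deduce $\min F\neq |F|+1$ from a parity count. Fix $F\in\mathcal G_{2n+1}$ and write $m:=|F|$. If $m=1$, then $F=\{2n+1\}$. The progression condition is vacuous, and $\min F=2n+1\neq 2=|F|+1$ because $n\ge 0$ forces $2n+1$ odd. So assume $m\ge 2$ and list the elements as $\max_1 F>\max_2 F>\cdots>\max_m F$.

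\textbf{Step 1: every gap between consecutive elements is odd.} Let $a=\max_{i+1}F$ and $b=\max_i F$, so no element of $F$ lies strictly between them. If $b-a$ were even, then $(a+b)/2$ would be an integer. Closure under integral $2$-averages would put it in $F$, strictly between $a$ and $b$, a contradiction. Hence each gap $q_i:=\max_i F-\max_{i+1}F$ is odd.

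\textbf{Step 2: consecutive gaps are equal.} Take three consecutive elements $a<b<c$ of $F$, with $b-a=q_{i+1}$ and $c-b=q_i$. Then $c-a=q_i+q_{i+1}$ is a sum of two odd numbers, hence even, so $(a+c)/2\in F$. This midpoint lies strictly between $a$ and $c$, and the only element of $F$ there is $b$. So $(a+c)/2=b$, which gives $q_i=q_{i+1}$. By induction all gaps equal a common odd value $q$, which is the second assertion of Lemma~\ref{lem:evenodd}.

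\textbf{Step 3: parity of the minimum.} By Step 2,
\begin{equation*}
\min F\ =\ (2n+1)-(m-1)q .
\end{equation*}
Since $q$ is odd, $(m-1)q\equiv m-1 \pmod 2$. Therefore $\min F\equiv 2n+2-m\equiv m \pmod 2$, so $\min F$ and $|F|$ have the same parity, and in particular $\min F\neq |F|+1$.

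I do not expect a real obstacle. The one point needing care is Step 2: one must use that the midpoint lies strictly between $a$ and $c$, and that $b$ is the unique element of $F$ in that range because $a,b,c$ are consecutive elements of $F$. The Schreier hypothesis is not needed for this lemma; only $2n+1\in F$ and the averaging closure are used.
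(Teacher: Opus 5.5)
Your proof is correct and follows essentially the same route as the paper. Both rest on the same observation: an integral midpoint of two elements of $F$ would lie strictly between them. This forces each gap between consecutive elements to be odd, and then forces $\max_i F$ to be the midpoint of $\max_{i-1}F$ and $\max_{i+1}F$.

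The only difference is the order. The paper first derives the alternating parity of $\max_i F$ to get $\min F\equiv |F| \pmod 2$, and then proves the progression. You prove the progression first and read the parity off $\min F=(2n+1)-(|F|-1)q$, which also spells out the paper's terse ``in general'' step.
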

\begin{proof}
We claim that $\max_2 F = 2k$ for some $k\in [1, n]$; otherwise, if $\max_2  F = 2k+1$ for some $k\in [1,n-1]$, then
\begin{equation}\frac{\max F + (2k+1)}{2}\ =\ \frac{(2k+1)+(2n+1)}{2}\ =\ k+n + 1\mbox{ must be in }F;\end{equation}
however,
\begin{equation}\max_2 F \ =\ 2k+1 \ <\ k+n+1 \ <\ 2n+1 \ =\ \max F,\end{equation}
which contradicts that $k+n+1$ is in $F$.
In general, we have 
\begin{equation}\max_i F\mbox{ is }\begin{cases}\mbox{ odd},&\mbox{ if } 2\nmid i,\\ \mbox{ even}, &\mbox{ if }2| i.\end{cases}\end{equation}
It follows that $\min F$ is odd if and only if $|F|$ is odd. Therefore, $\min F\neq |F| + 1$.

We prove the second part of the lemma. Let $i\in [2, |F|-1]$. 
From above, $\max_{i-1} F$ and $\max_{i+1} F$ have the same parity, which implies that 
$(\max_{i-1} F + \max_{i+1} F)/2$ is in $F$. Hence, 
\begin{equation}\max_i F\ =\ \frac{\max_{i-1} F + \max_{i+1} F}{2},\end{equation}
so $\max_{i} F - \max_{i+1} F = \max_{i-1} F - \max_{i} F$. Therefore, there exists an odd $q$ with
\begin{equation}\max_{i} F - \max_{i+1} F\ =\ q,\mbox{ for all }i\in [1, |F|-1].\end{equation}
\end{proof}

Lemma \ref{lem:evenodd} guides us to partition $\mathcal{G}_{2n+1}$, with $n\ge 0$, into
\begin{align*}
\mathcal{G}_{2n+1}^1&\ =\ \{ F \in \mathcal{G}_{2n+1}\,:\, \min F \geq |F| + 2\}, \mbox{ and }\\
\mathcal{G}_{2n+1}^2&\ =\ \{ F \in \mathcal{G}_{2n+1}\,:\, F \text{ is maximal Schreier}\}.
\end{align*}
For $n \ge 1$, define
\begin{align*}
&f_1: \mathcal{G}_{2n-1} \rightarrow \mathcal{G}_{2n+1}^1,\quad  F \mapsto F+2,\mbox{ and }\\
&f_2: \{ a\in\mathbb{N}: a \text{ divides } n\} \rightarrow \mathcal{G}_{2n+1}^2, \quad  a \mapsto \left\{a+1 + i\cdot \left(\frac{2n-a}{a}\right)\,:\, 0\le i\le a\right\}.
\end{align*}

\begin{lem}\label{lem_div2_1}
The map $f_1$ is a bijection.
\end{lem}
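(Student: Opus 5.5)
We follow the same template as Lemma~\ref{lem:f1-bijection} and Lemma~\ref{lem:f1-isolated}. The plan is to check, in order, that $f_1$ is well-defined, that it is injective, and that it is surjective (with inverse $E\mapsto E-2$). The one new ingredient is that the closure-under-integral-averages condition must be preserved under a shift by $2$. Note $n\ge 1$, so $\mathcal{G}_{2n-1}$ makes sense; for $n=1$ it is $\{\{1\}\}$.

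\emph{Well-definedness.} Take $F\in\mathcal{G}_{2n-1}$.
\begin{itemize}
\item Since $\max F=2n-1$, we get $\max(F+2)=2n+1$, and $F+2\subset[2n+1]$.
\item The Schreier inequality becomes $\min(F+2)=\min F+2\ge |F|+2=|F+2|+2$, which is exactly the defining inequality of $\mathcal{G}^1_{2n+1}$.
\item For the average condition, let $a+2,b+2\in F+2$ with $((a+2)+(b+2))/2\in\mathbb{Z}$. This holds iff $(a+b)/2\in\mathbb{Z}$, and then $((a+2)+(b+2))/2=(a+b)/2+2$. Since $(a+b)/2\in F$, we get $(a+b)/2+2\in F+2$.
\end{itemize}
So $f_1(F)\in\mathcal{G}^1_{2n+1}$.

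\emph{Injectivity and surjectivity.} Injectivity is immediate, since translation is injective. For surjectivity, take $E\in\mathcal{G}^1_{2n+1}$ and set $F:=E-2$.
\begin{itemize}
\item Since $\min E\ge |E|+2\ge 3$, the set $F$ lies in $\mathbb{N}$.
\item We have $\max F=2n-1$ and $F\subset[2n-1]$.
\item The Schreier condition holds: $\min F=\min E-2\ge |E|=|F|$.
\item The average condition transfers back by the same parity and translation computation as above.
\end{itemize}
Hence $F\in\mathcal{G}_{2n-1}$ and $f_1(F)=E$.

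The only mildly delicate point is the surjectivity direction. There we must use the full strength of $\min E\ge |E|+2$, rather than merely nonmaximality, to guarantee that $E-2$ is still Schreier and consists of positive integers. This is precisely why the partition uses Lemma~\ref{lem:evenodd}: it rules out $\min E=|E|+1$, so $\mathcal{G}^1_{2n+1}$ and $\mathcal{G}^2_{2n+1}$ together exhaust $\mathcal{G}_{2n+1}$. Lemma~\ref{lem:evenodd} is not needed inside the proof of this lemma itself.
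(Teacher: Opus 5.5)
Your proposal is correct and follows essentially the same route as the paper: you check that $F+2$ lies in $\mathcal{G}^1_{2n+1}$ (its maximum, the Schreier inequality with the extra $+2$, and closure under integral averages by translation), observe that injectivity is trivial, and invert via $E\mapsto E-2$ using $\min E\ge |E|+2$. Your extra verification that $E-2\subset\mathbb{N}$ is a small detail the paper leaves implicit.
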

\begin{proof}
Let $F \in \mathcal{G}_{2n-1}$. By the definition of $f_1$, we have $\max f_1(F)=2n+1$. Fix $a, b \in f_1(F)$ with 
$(a+b)/2\in \mathbb{Z}$. Then $a-2$ and $b-2$ are in $F$, and $((a-2) + (b-2))/2$ is in $\mathbb{Z}$ and thus, is in $F$. 
Hence, 
\begin{equation}\frac{a+b}{2} \ =\ \frac{(a-2) + (b-2)}{2} + 2\mbox{ is in }f_1(F).\end{equation}
Moreover, 
\begin{equation}
\min f_1(F) \ =\ \min F + 2 \ \ge\ |F|+2 \ =\ |f_1(F)| + 2.
\end{equation}
Hence, $f_1$ is well-defined.

Injectivity is obvious from the definition. To show surjectivity, pick $E \in \mathcal{G}_{2n+1}^1$. Let $G:= E - 2$. Then $\max G = 2n-1$. Fix $a,b \in G$ with $(a+b)/2\in \mathbb{Z}$. Then $(a+2)$ and $(b+2)$ are in $E$, and $((a+2)+(b+2))/2$ is an integer and thus, is in $E$. Hence,
\begin{equation}\frac{a+b}{2}\ =\ \frac{(a+2)+(b+2)}{2}-2\mbox{ is in }G.\end{equation}
Lastly, we have
\begin{align}
\min G \ = \ \min E - 2 \ \ge\ |E| \ =\ |G|.
\end{align}
Hence, $G \in \mathcal{G}_{2n-1}$, and $f_1$ is surjective.
\end{proof}

\begin{lem}\label{lem_div2_2}
The map $f_2$ is a bijection.
\end{lem}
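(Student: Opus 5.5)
The plan is to prove well-definedness, injectivity, and surjectivity of $f_2$ directly, with Lemma~\ref{lem:evenodd} doing the work in the surjectivity step.

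\emph{Well-definedness.} Fix a divisor $a$ of $n$ and put $d := (2n-a)/a = 2n/a - 1$. Since $a\mid n$, the number $2n/a$ is an even integer, so $d$ is a positive odd integer (positive because $a\le n<2n$). The set $f_2(a)$ is then an arithmetic progression with $a+1$ distinct terms, minimum $a+1$, and maximum
\begin{equation*}
a+1+a\cdot\tfrac{2n-a}{a}\ =\ 2n+1.
\end{equation*}
Hence $f_2(a)\subset[2n+1]$, it contains $2n+1$, and $\min f_2(a)=|f_2(a)|$, so it is maximal Schreier.

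For closure under integral averages, take two terms $a+1+id$ and $a+1+jd$. Their average is $a+1+\tfrac{(i+j)d}{2}$. Because $d$ is odd, this is an integer exactly when $i+j$ is even, and in that case it equals the term with index $(i+j)/2\in[0,a]$. So $f_2(a)\in\mathcal G^2_{2n+1}$.

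\emph{Injectivity.} This is immediate, since $a=\min f_2(a)-1$.

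\emph{Surjectivity.} Take $E\in\mathcal G^2_{2n+1}$ and write $m:=|E|=\min E$.
\begin{itemize}
\item First rule out $m=1$. In that case $E=\{1\}$, which forces $2n+1=1$, contradicting $n\ge 1$. So $m\ge 2$.
\item By Lemma~\ref{lem:evenodd}, $E$ is an arithmetic progression with some odd common difference $q$. Its $m$ terms run from $m$ to $2n+1$, so
\begin{equation*}
(m-1)q\ =\ 2n+1-m\ =\ 2n-(m-1).
\end{equation*}
\item Set $a:=m-1\ge 1$. Then $a(q+1)=2n$. Since $q$ is odd, $q+1$ is even, so $a\cdot\tfrac{q+1}{2}=n$ and therefore $a\mid n$.
\item Finally, $q=(2n-a)/a$, so $E=f_2(a)$.
\end{itemize}

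The main obstacle is the divisibility step. It is essential that $q$ is odd, which converts the condition $a\mid 2n$ into $a\mid n$. This is exactly what Lemma~\ref{lem:evenodd} supplies, so once that lemma is invoked the argument is routine.
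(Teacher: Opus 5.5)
Your proof is correct and follows essentially the same route as the paper. For well-definedness you check that $f_2(a)$ is a maximal Schreier arithmetic progression with odd difference that is closed under integral averages. For surjectivity you use Lemma~\ref{lem:evenodd} to write $E$ as an arithmetic progression with odd difference $q$, which gives $(\min E-1)\frac{q+1}{2}=n$. You also explicitly rule out $|E|=1$, which is needed for $q=\max E-\max_2 E$ to make sense and for $a\ge 1$; the paper leaves this small detail implicit.
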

\begin{proof}
Pick $r\in  \{a: a \text{ divides }n\}$. We have $\max f_2(r) = 2n+1$ by the definition of $f_2$. Next, fix $i, j \in [0, r]$. If \begin{equation}\frac{r+1+i\frac{2n-r}{r}+r+1+j\frac{2n-r}{r}}{2} \ =\ r+1 + (i+j)\frac{n}{r} - \frac{i+j}{2}\end{equation}
is an integer,
then $i+j$ is even, and 
\begin{equation}\frac{r+1+i\frac{2n-r}{r}+r+1+j\frac{2n-r}{r}}{2} \ =\ r+1+\frac{i+j}{2}\cdot \frac{2n-r}{r}\mbox{ is in } f_2(r).\end{equation}
Lastly, the set $f_2(r)$ is maximal Schreier because
\begin{equation}
\min f_2(r)\ =\ r+1\ =\ |f_2(r)|.
\end{equation}
We have verified that $f_2$ is well-defined.

Injectivity is obvious from the definition. To show surjectivity, pick $E\in \mathcal{G}_{2n+1}^2$. Let $g := \min E -1$ and $q := \max E-\max_2E$, which is odd. By Lemma~\ref{lem:evenodd}, 
\begin{align}
    \min E&\ =\ 2n+1 - (|E|-1)q\\
    &\ =\ 2n + 1 - (\min E - 1)q,
\end{align}
so 
\begin{equation}n \ =\ (\min E - 1)\frac{q+1}{2}\ =\ g\cdot \frac{q+1}{2}.\end{equation}
Hence, $g \in \{ a: a \text{ divides } n\}$, and 
\begin{align}
    f_2(g)&\ =\ \left\{g+1, g+1 + \frac{2n-g}{g}, \ldots, g+1+g\frac{2n-g}{g}\right\}\\
    &\ =\ \left\{\min E, \min E+q, \ldots, \min E + g\cdot q\right\}\ =\ E.
\end{align}
Therefore, the map $f_2$ is surjective.
\end{proof}

\begin{proof}[Proof of Theorem \ref{mtsigma}]
    Theorem \ref{mtsigma} follows immediately from Lemmas \ref{lem_div2_1} and \ref{lem_div2_2}.
\end{proof}


\section{Future work}
Beyond the neighborhood conditions on standard Schreier sets considered here, we expect that many other interesting variants remain to be explored. One natural extension is to count the cardinality for similar neighborhood restrictions under the generalized Schreier condition
$$q\min F\ \ge\ p|F|,$$
or to require $F$ to contain elements from a special sequence rather than from $[n]$. Furthermore, we mention two notable problems from the literature. The first one asks for a recurrence for the collections of multisets
$$\mathcal{A}^{(s)}_{p,q,n}\ :=\ \{F\subset\{\underbrace{1,\ldots, 1}_s, \ldots, \underbrace{n-1,\ldots, n-1}_s, n\}\,:\, n\in F\mbox{ and }q\min F\ge p|F|\}.$$
Note that \cite[Theorem 1.1]{BCF} and \cite[Theorem 1.2]{CGKMTV} solved the cases $s = 1$ and $q = 1$, respectively. 
The second one investigates the nonlinear Schreier condition $(\min F)^q\ge |F|^p$ for $p, q\in \mathbb{N}$. The case $q = 1$ was already addressed in \cite[Theorem 4]{CIMSZ}.

\end{document}